\newcommand\un{\mathds{1}}
\newcommand\eps{\varepsilon}
\renewcommand\phi{\varphi}
\newcommand\pro[1]{P\left(#1\right)}
\newcommand\esp[1]{\mathbb{E}\left[#1\right]}
\newcommand\proc[2]{\mathbb{P}\left(\left.#1\right|#2\right)}
\newcommand\uno[1]{\un_{\left\{#1\right\}}}
\newcommand\1{\un}
\newtheorem{thm}{Theorem}[section]
\newtheorem{prop}[thm]{Proposition}
\newtheorem{assu}{Assumption}
\newtheorem{lem}[thm]{Lemma}
\newtheorem{rem}[thm]{Remark}
\newtheorem{defi}[thm]{Definition}
\newtheorem{ex}{Example}
\newcommand\n{\mathbb{N}}
\newcommand\W{\mathcal{W}}
\newcommand\F{\mathcal {F}}
\newcommand\G{\mathcal {G}}
\newcommand\PP{\mathcal {P}}
\newcommand\B{\mathcal {B}}
\renewcommand\r{\mathbb{R}}
\renewcommand\ll\left
\newcommand\rr\right
\begin{document}

\begin{frontmatter}
\title {White-noise driven conditional McKean-Vlasov limits for systems of particles with simultaneous and random jumps}


\runtitle{Conditional McKean-Vlasov limit}

\begin{aug}

\author{\fnms{Xavier} \snm{Erny}\thanksref{m4}\ead[label=e4]{xavier.erny@univ-evry.fr}},
\author{\fnms{Eva}
\snm{L\"ocherbach}\thanksref{m2}\ead[label=e5]{locherbach70@gmail.com}}
\and \author{\fnms{Dasha} \snm{Loukianova}\thanksref{m4} \ead[label=e4]{dasha.loukianova@univ-evry.fr}}

\address{
  \thanksmark{m4}Universit\'e Paris-Saclay, CNRS, Univ Evry, Laboratoire de Math\'ematiques et Mod\'elisation d'Evry, 91037, Evry, France\\
  \thanksmark{m2}Statistique, Analyse et Mod\'elisation Multidisciplinaire, Universit\'e Paris 1 Panth\'eon-Sorbonne, EA 4543 et FR FP2M 2036 CNRS}

\runauthor{X. Erny et al.}

 \end{aug}

\begin{abstract}
We study the convergence of $N-$particle systems described by SDEs driven by Brownian motion and Poisson random measure, where the coefficients depend on the empirical measure of the system. Every particle jumps with a jump rate depending on its position and on the empirical measure of the system. Jumps are simultaneous, that is, at each jump time, all particles of the system are affected by this jump and receive a random jump height that is centred  and scaled in $N^{-1/2}.$ This particular scaling implies that the limit of the empirical measures of the system is random, describing the conditional distribution of one particle in the limit system. We call  such limits {\it conditional McKean-Vlasov limits}. The conditioning in the limit measure reflects the dependencies between coexisting particles in the limit system such that we are dealing with a {\it conditional propagation of chaos property}. As a consequence of the scaling in $N^{-1/2}$ and of the fact that the limit of the empirical measures is not deterministic  the limit system turns out to be solution of a non-linear SDE, where not independent martingale measures and white noises appear having an intensity that depends on the conditional law of the process.
\end{abstract}

\begin{keyword}[class=MSC]
  \kwd{60K35}
	\kwd{60G09}
	\kwd{60H40}
	\kwd{60F05}
  \end{keyword}

\begin{keyword}
\kwd{Martingale measures}
 \kwd{McKean-Vlasov equations}
 \kwd{Mean field interaction}
 \kwd{Interacting particle systems}
 \kwd{Propagation of chaos}
 \kwd{Exchangeability}
\end{keyword}

\end{frontmatter}

\section{Introduction}

McKean-Vlasov equations are  stochastic differential equations  where the coefficients depend on the distribution of the solution. Such equations
 typically arise as 
limits of mean field $N-$particle systems, where the coefficients depend on the empirical measure of the system. These kind of limits  are referred to as {\it McKean-Vlasov limits}  (see e.g. \cite{gaertner},  \cite{graham_mckean-vlasov_1992} and \cite{andreis_mckeanvlasov_2018}).

In this paper we  
extend these limits 
to a rather general class of {\it conditional McKean-Vlasov limits}. More precisely, we consider the system of interacting particles with a diffusive term and jumps, given by
\begin{multline}
dX^{N,i}_t=b(X^{N,i}_t,\mu^N_t)dt+\sigma(X^{N,i}_t,\mu^N_t)d\beta^i_t\label{XNi}\\
+\frac{1}{\sqrt{N}}\sum_{k=1, k \neq i }^N\int_{\r_+\times E}\Psi(X^{N,k}_{t-},X^{N,i}_{t-},\mu^N_{t-},u^k,u^i)\uno{z\leq f(X^{N,k}_{t-},\mu^N_{t-})}d\pi^k(t,z,u), 1 \le i \le N,\\
\end{multline}
starting from the initial condition 
$$(X^{N,i}_0)_{1\leq i\leq N}\sim\nu_0^{\otimes N}.$$
Here $\mu^N_t=N^{-1}\sum_{j=1}^N\delta_{X^{N,j}_t}$ is the empirical measure of the system, $\beta^i$ ($i\geq 1$) are i.i.d. one-dimensional standard Brownian motions and $\pi^k$ ($k\geq 1$) i.i.d. Poisson measures on $ \r_+ \times \r_+ \times E,$ where $E=\r^{\n^*},$ $ \n^* = \{ 1, 2, 3, \ldots \} .$ Each $\pi^k $ has intensity $ds\cdot dz\cdot \nu(du)$, with $\nu$ a product probability measure on $E.$   The initial distribution $\nu_0$ is a probability measure  on $\r$ having a finite second moment. We assume that the Poisson measures, the Brownian motions and the initial conditions are independent. Since the jumps are scaled in $ N^{-1/2},$  to prevent  the jump term from exploding, we suppose that 
the height of the jump term is centred  (see Assumption~\ref{hyppsi} below), that is,  for all $x,y\in\r,$ for all probability measures $m$ on $\r,$
$$\int_E \Psi(x,y,m,u^1,u^2)d\nu(u)=0.$$

Our model is close to the one considered in \cite{andreis_mckeanvlasov_2018}.  As there, any particle in position $x$ jumps at a rate $ f ( x, m ), $ whenever $ m $ is the common state of the system, that is, the current value of the empirical measure. Mainly motivated by applications coming from neuroscience (where jumps are spikes of the neurons leading to an increase of the potential of all other neurons, see e.g. \cite{de_masi_hydrodynamic_2015} and \cite{fournier_toy_2016}, or \cite{duarte_hydrodynamic_2015} for a spatially structured model),  jumps are simultaneous, that is, all particles in the system are affected by any of the jumps. More precisely, 
the random jump height depends both on the current position $y$ of the particle receiving the jump and on the position $x$ of the particle that causes the jump. Notice that contrarily to \cite{andreis_mckeanvlasov_2018} 
we  do not include auto-interactions induced by jumps in the equation  \eqref{XNi},  i.e. terms of the type 
$$\int_{\r_+\times E}  \Theta (X^{N,i}_{t-},\mu^N_{t-},u^i)\uno{z\leq f(X^{N,i}_{t-},\mu^N_{t-})}d\pi^i(t,z,u).$$
 Indeed, such terms would survive in the large population limit leading to discontinuous trajectories, and the presence of the indicator $\uno{z\leq f(X^{N,i}_{t-},\mu^N_{t-})}$ requires to work both in $L^1 $ and in $L^2 $ (see \cite{graham_mckean-vlasov_1992}, see also \cite{erny_conditional_2020} where we dealt both with simultaneous small jumps and big ones). In the present paper, we decided to disregard these big  jumps to focus on the very specific form of the limit process given in~\eqref{barXi} below.

Coming back to \cite{andreis_mckeanvlasov_2018}, the main difference to our work is that there the averaging regime is considered:
the common contribution of  all particles to the dynamic of a given particle, represented  in \eqref {XNi} by the  sum of the stochastic integrals with respect to the Poisson random measures,
is scaled in $N^{-1}.$
In this situation it was shown in \cite{andreis_mckeanvlasov_2018}   that the {\it Propagation of chaos } phenomenon holds: the coordinates are i.i.d. in the limit. Moreover,  the limit of the empirical measures is the distribution of any coordinate 
of the limit system, and  
the dynamic of one coordinate is described by a classical McKean-Vlasov equation.

The novelty of the present paper is that we consider \eqref{XNi} in a diffusive regime, where the  common contribution of  all particles to the dynamic of a given particle
is scaled in $ N^{- 1/2}. $ 
It has already been observed that this diffusive scaling gives rise to the {\it Conditional propagation of chaos} property  (see \cite{erny_conditional_2020}): a common noise appears in the limit system, and the coordinates of the limit system are conditionally i.i.d given this common noise. Moreover  the limit of the empirical measures is shown to be the conditional distribution of any coordinate of the limit system, given the common noise.  In \cite{erny_conditional_2020} the common noise is a Brownian motion  created by  the contribution of the jumps of all particles in the dynamic of a given particle, as a consequence of  the scaling  $1/\sqrt N$  and the central limit theorem.

It turns out that  in the present work,  to describe the precise dynamic of the limit, and in particular to identify the common noise, we need to rely on martingale measures and white noises 
 (see \cite{walsh_introduction_1986} and \cite{el_karoui_martingale_1990}) as driving measures. More precisely, the limit system will be shown to be solution of a non-linear SDE driven by (white noise) martingale measures  having an intensity that depends on the conditional law of the system itself.  These martingale measures do only appear in the limit system as a consequence of the central limit theorem and the joint contribution of all small and centered jumps.  The main reason for the appearance of the martingale measures instead of the Brownian motion is the spatial correlation of the finite system, i.e. the dependence on the positions both of the particle giving and the one receiving the input.  

To the best of our knowledge, this is the first time that McKean-Vlasov limits are considered where the underlying driving martingale measures are only present in the limit system, but not at the level of the $N-$particle system. We refer however to  \cite{julien-ost} who work in the averaging regime and study the fluctuations of  a stochastic system, associated to spatially structured Hawkes processes, around its mean field limit, and where particles in the mean field limit are still  independent.  

Processes driven by martingale measures having an intensity that depends on the law of the process itself have already appeared in the literature related to particle approximations of Boltzmann's equation, starting with the classical article by  \cite{tanaka} that gave rise to a huge literature (to cite juste a few, see  \cite{graham1997}, \cite{sylvie}, \cite{nicolassylvie},  \cite{fournier2016}). In these papers, the underlying random measure is Poisson, and the dependence on the law arises  at the level of the particle system that is designed to approximate Boltzmann's equation. In our work, the underlying random measure is white noise since jumps disappear in the limit, and the dependence on the (conditional) law of the process does only appear in the limit, as an effect of the conditional propagation of chaos.

 Let us now describe the limit system associated to \eqref{XNi}. To find its precise form, we  mainly need to understand the limits of 
the martingales which are the jump terms of the system,
given by
$$J^{N,i}_t=\frac{1}{\sqrt{N}}\sum_{k=1, k \neq i }^N\int_{[0,t]\times\r_+\times E}\Psi(X^{N,k}_{s-},X^{N,i}_{s-},\mu^N_{s-},u^k,u^i)\uno{z\leq f(X^{N,k}_{s-},\mu^N_{s-})}d\pi^k(s,z,u).$$

In what follows we consider some concrete examples of $\Psi$ and give the limits of the corresponding predictable quadratic covariations of  $J^{N,i}_t$ to have a better understanding of its limit.   We shall always assume that the jump rate function $f$ is bounded. Let us begin with a situation close to that of \cite{erny_conditional_2020}, where in the limit system each coordinate shares a common Brownian motion~$W$.

\begin{ex}\label{commonW}
Suppose that $\Psi(x,y,m,u^1,u^2 )=\Psi(u^1).$   Then we have, for all $1 \leq i,j\leq N,$
\begin{multline*}
\langle J^{N,i},J^{N,j}\rangle_t=\frac1N\sum_{k=1, k \neq i, j }^N\int_0^t\int_\r \Psi(u^k)^2f(X^{N,k}_s,\mu^N_s)d\nu_1(u^k)ds\\
= \varsigma^2\int_0^t \int_\r f(x,\mu^N_s)\mu^N_s(dx)ds + O (\frac{ t}{N}),
\end{multline*}
since $f$ is bounded, with $\varsigma^2:=\int_\r\Psi(u^1)^2d\nu_1(u^1)$ and $\nu_1$ the projection of $\nu$ on the first coordinate. Denote  $\mu$ the limit of the empirical measures $\mu^N.$ Then the angle brackets process should converge as $N$ goes to infinity to
$$\varsigma^2\int_0^t\int_\r f(x,\mu_s)\mu_s(dx)ds.$$
As the  limit quadratic covariations    are non-null, in the limit system  there will be a common Brownian motion $W$ underlying each particle's motion. Thus, the limit system is given by
$$d\bar X^i_t=b(\bar X^{i}_t,\mu_t)dt+\sigma(\bar X^{i}_t,\mu_t)d\beta^i_t+\varsigma\sqrt{\int_\r f(x,\mu_t)\mu_t(dx)}dW_t,$$
where $W$ is a standard one-dimensional Brownian motion. We will also show that $\mu=\mathcal{L}(\bar X^1|W),$ since $\mu$ is necessarily the directing measure of $(\bar X^i)_{i\geq 1}.$ In particular, the conditioning in $\mu$ reflects the presence of some common noise, which is $W$ here.
\end{ex}

Now, let us consider an opposite situation where, in the limit system, each coordinate has its own Brownian motion~$W^i$, and where these Brownian motions are independent. 

\begin{ex}\label{iidW}
In this example, we assume that $\Psi(x,y,m,u^1,u^2)=\Psi(u^2).$ As in the previous example, we begin by computing the angle brackets of the jump terms between particles $i$ and $j$. Here we distinguish two cases: $i\neq j$ and $i=j$. If $i\neq j,$ using the fact that  $ \nu$ is a product measure and that $\Psi$ is centered (Assumption \ref{hyppsi}),
$$
\langle J^{N,i},J^{N,j}\rangle_t=
\frac1N\sum_{k=1, k \neq i, j }^N\int_0^t\int_E \Psi(u^i)\Psi(u^j)f(X^{N,k}_s,\mu^N_s)d\nu(u)ds= 0.
$$
Moreover, if $i=j,$
$$\langle J^{N,i}\rangle_t=\frac1N\sum_{k=1, k \neq i }^N\int_0^t\int_\r \Psi(u^i)^2f(X^{N,k}_s,\mu^N_s)d\nu_1(u^i)ds= \varsigma^2\int_0^t \int_\r f(x,\mu^N_s)\mu^N_s(dx)ds + O (\frac{ t}{N}).$$

As the quadratic  covariations   between different particles are null, there will be no common noise in the limit system. So, instead of having one common Brownian motion $W$ as in the previous example, here, each particle is driven by its own Brownian motion. More precisely, in this example  the limit system is
$$d\bar X^i_t=b(\bar X^{i}_t,\mu_t)dt+\sigma(\bar X^{i}_t,\mu_t)d\beta^i_t+\varsigma\sqrt{\int_\r f(x,\mu_t)\mu_t(dx)}dW^i_t,$$
where $W^i$ ($i\geq 1$) are independent standard one-dimensional Brownian motions, independent of $ \beta^i $ ($i \geq 1 $), and where $\mu=\mathcal{L}(\bar X^1)$ is deterministic in this particular case.
\end{ex}
	
Finally let us show an example where, as in Example~\ref{commonW}, each particle shares a common Brownian motion $W$, and, as in Example~\ref{iidW}, each particle has also its own Brownian motion $W^i,$ and where both $W$ and $W^i $ are produced by the  common contribution of the  small jumps.

\begin{ex}\label{combine}
Here we assume that $\Psi(x,y,m,u^1,u^2)=\Psi(u^1,u^2).$ The angle brackets of the jump terms of the particles $i$ and $j$ are, if $i\neq j,$
\begin{multline*}
\langle J^{N,i},J^{N,j}\rangle_t=\frac1N\sum_{k=1, k \neq i, j }^N\int_0^t\int_E \Psi(u^k,u^i)\Psi(u^k,u^j)f(X^{N,k}_s,\mu^N_s)d\nu(u)ds\\
={ \xi^2} \int_0^t \int_\r f(x,\mu^N_s)\mu^N_s(dx)ds + O (\frac{ t}{N}),
\end{multline*}
where we know that ${\xi^2}:=\int_E \Psi(u^1,u^2)\Psi(u^1,u^3)d\nu(u)\geq 0$ since it is the covariance of the infinite exchangeable sequence $(\Psi(U^1,U^k))_{k\geq 2}$, where $(U^k)_{k\geq 1}\sim\nu$. And if $i=j,$
$$\langle J^{N,i}\rangle_t=\frac1N\sum_{k=1, k \neq i }^N\int_0^t\int_E \Psi(u^i,u^k)^2f(X^{N,k}_s,\mu^N_s)d\nu(u)ds=\varsigma^2\int_0^t \int_\r f(x,\mu^N_s)\mu^N_s(dx)ds+ O (\frac{ t}{N}) ,$$
where $ \varsigma^2 = \int_E \Psi(u^1,u^2)^2 d\nu(u).$ 

As in Example~\ref{commonW}, there must be a common Brownian motion since the quadratic covariations between different particles are not zero. But, here $ \langle J^{N,i},J^{N,j}\rangle_t \neq  \langle J^{N,i}\rangle_t $ if $j \neq i .$ That is why there must be additional Brownian motions. Formally, the limit system in this example is
\begin{align}
d\bar X^i_t=&b(\bar X^{i}_t,\mu_t)dt+\sigma(\bar X^{i}_t,\mu_t)d\beta^i_t\label{eq:combine}\\
&+ {\xi } \sqrt{\int_\r f(x,\mu_t)\mu_t(dx)}dW_t+\sqrt{(\varsigma^2- {\xi^2})\int_\r f(x,\mu_t)\mu_t(dx)}dW^i_t,\nonumber
\end{align}
where we know that $\varsigma^2\geq\xi^2$ by Cauchy-Schwarz's inequality. As before, $W,W^i$ ($i\geq 1$) are independent standard one-dimensional Brownian motions, and $\mu=\mathcal{L}(\bar X^1|W)$ is random in this case. Note  that in the case where $\Psi (x,y,m,u^1,u^2)=\Psi(u_1)$,  we have  $\xi^2=\varsigma^2$, and in the case $\Psi (x,y,m,u^1,u^2)=\Psi(u_2),$ we have $\xi^2=0$,  hence this example covers both Example \eqref{commonW} and Example \eqref{iidW}. 
\end{ex}

Before defining the limit system in the general case, let us explain the main difficulty that arises. If we apply the same reasoning as in Examples~\ref{commonW},~\ref{iidW} and~\ref{combine} to the general model given in~\eqref{XNi}, we obtain for two 
 different particles 
\begin{align*}
&\langle J^{N,i},J^{N,j}\rangle_t=\\
&\frac1N\sum_{k=1, k \neq i, j }^N\int_0^t f(X^{N,k}_s,\mu^N_s)\int_E\Psi(X^{N,k}_s,X^{N,i}_s,\mu^N_s,u^k,u^i)\Psi(X^{N,k}_s,X^{N,j}_s,\mu^N_s,u^k,u^j)\nu(du)ds\\
&=\int_0^t\int_\r f(x,\mu^N_s)\int_E \Psi(x,X^{N,i}_s,\mu^N_s,u^1,u^2)\Psi(x,X^{N,j}_s,\mu^N_s,u^1,u^3)\nu(du)\mu^N_s(dx)ds + O (\frac{ t}{N}),
\end{align*}
under appropriate conditions on $\Psi,$ see Assumption \ref{hyppsi}  below.  And  for the quadratic variation of the jump term of a single particle  we get
$$\langle J^{N,i}\rangle_t=\int_0^t\int_\r  f(x,\mu^N_s) \int_E \Psi(x,X^{N,i},\mu^N_s,u^1,u^2)^2\nu(du)\mu^N_s(dx)ds,$$
still up to an error term of order $1/N.$ 

Contrarily to the situation of the previous examples, the quadratic covariations depend on the positions of the particles $i$ and $j$ and 
 can only be written as integrals of products where this integration involves, among others, the empirical measure of the process.  This is the reason why we need to use martingale measures and white noises instead of Brownian motions, as introduced in~\cite{walsh_introduction_1986}, confer also to~\cite{el_karoui_martingale_1990}. 

Let us briefly explain why using martingale measures is well adapted to our problem.  If $M$ is a martingale measure on $\r_+\times F$ (with $(F, {\cal F})$ some measurable space), having intensity $dt\cdot m_t(dy),$ then for all $ A, B \in {\cal F},$  $M_t(A):=M([0,t]\times A); t\geq 0$ is a square-integrable martingale and 
$$\ll\langle M_\cdot(A),M_\cdot(B)\rr\rangle_t=\int_0^t\int_F \un_{A\cap B}(y)m_s(dy)ds.$$

Having this remark in mind, it is natural to write the limit system in a similar way as~\eqref{eq:combine}, but replacing the Brownian motions by martingale measures. More precisely, under appropriate conditions on the coefficients, the limit system $ (\bar X^i)_{i \geq 1 }$ of \eqref{XNi} will be shown to be of the form
\begin{eqnarray}
d\bar X^i_t&=&b(\bar X^{i}_t,\mu_t)dt+\sigma(\bar X^{i}_t,\mu_t)d\beta^i_t\label{barXi}
+\int_\r\int_\r\sqrt{f(x,\mu_t)}\tilde\Psi(x,\bar X^i_t,\mu_t,v)dM(t,x,v) , \\
&&+\int_\r\sqrt{f(x,\mu_t)}\kappa(x,\bar X^i_t,\mu_t)dM^i(t,x), \;  i \geq 1,\nonumber\\
(\bar X^i_0)_{i\geq 1}&\sim& \nu_0^{\otimes\mathbb{N}^*}\nonumber .
\end{eqnarray}
In the above formula, 
\begin{align}
 \mu_t:=&\mathcal{L}(\bar X^i_t|W), \label{mut}\\
\tilde \Psi(x,y,m,v) :=& \int_\r \Psi(x,y,m,v,u^1)d\nu_1(u^1),\label{psitilde}\\
\kappa(x,y,m)^2:=&\int_E\Psi(x,y,m,u^1,u^2)^2d\nu(u)-\int_\r\tilde\Psi(x,y,m,v)^2d\nu_1(v)\nonumber\\
=&\int_E\Psi(x,y,m,u^1,u^2)^2d\nu(u)-\int_E\Psi(x,y,m,u^1,u^2)\Psi(x,y,m,u^1,u^3)d\nu(u) .\label{kappacarre}
\end{align}
Notice that the expression~\eqref{kappacarre} is positive by Cauchy-Schwarz's inequality.

In the above equations,  $M (dt, dx, dv )$ and $M^i (dt, dx) $ are orthogonal martingale measures on $ \r_+ \times \r \times \r $ ($\r_+ \times \r$ respectively) with respective intensities $dt\cdot\mu_t(dx)\cdot\nu_1 (dv) $ and $dt\cdot\mu_t(dx),$ defined as
\begin{equation}\label{eq:M}
M^i_t(A):=\int_0^t\un_A(F_s^{-1}(p))dW^i(s,p)\textrm{ and }M_t(A\times B):=\int_0^t\un_A(F_s^{-1}(p))\un_B(v)dW(s,p,v),
\end{equation}
with $W$ a white noise on $\r_+^2\times\r$ with intensity $dt\cdot dp\cdot d\nu_1(v),$  and $W^i$ ($i\geq 1$) independent white noises on $\r_+^2$, independent from $W,$ with intensity $dt\cdot dp .$ In the above formula, $F_s(x):=\pro{\bar X^i_s\leq x|W}$ is the conditional distribution function, conditionally on $W,$ and $F_s^{-1}$ is the generalized inverse of $F_s.$ As in~\eqref{XNi}, we assume that the Brownian motions, the white noises and the initial conditions are independent.

In the case where $\Psi(x,y,m,u_1,u_2)=\Psi (u_1,u_2),$ we see that $\kappa^2=\varsigma^2- {\xi^2}$ is a constant and that
$\tilde \Psi $ does only depend on $v$ such  that we can represent the two integrals with respect to the martingale measures in \eqref {barXi}  as two integrals against Brownian motions, recovering all previous examples.

Let us give some comments on the above system of equations. We have already argued that, in general,  $\mu_t$ is a random measure because of the scaling $N^{-1/2} .$ We shall prove that $\mu_t$ is actually the law of $\bar X^1$ conditionally on the common noise of the system. This common noise is the white noise $W$  underlying the martingale measure~$M.$
It is not immediately obvious that the definition of the martingale measures $ M$ and $M^i $ in \eqref{eq:M} and the limit system \eqref{barXi} are well-posed. In what follows, we shall give conditions ensuring that equation~\eqref{barXi} admits a unique strong solution. This is the content of our first main theorem, Theorem \ref{existencesolution}. To prove this theorem, we propose a  Picard iteration in which we construct  a sequence of martingale measures whose intensities depend on the conditional law of the instance of the process within the preceding step. One main ingredient of the proof is the well-known fact that the Wasserstein-$2-$distance of the laws of two real-valued random variables is given by the $L^2-$ distance of their inverse distribution functions - we apply this fact here to the conditional distribution functions. 

Using arguments that are inspired by \cite{erny_conditional_2020}, we then show in our second main theorem, Theorem \ref{convergencemuN}, that the finite particle system converges to the limit system, that is, $(X^{N,i})_{1\leq i\leq N}$ converges to $(\bar X^i)_{i\geq 1}$ in distribution in $D(\r_+,\r)^{\n^*}.$ This convergence is the consequence of the well-posedness of an associated martingale problem.  
 Contrarily to \cite{erny_conditional_2020} the finite system here depends on the empirical measure, and the conditions on the regularity of its coefficients are formulated in terms of the Wasserstein distance. To reconcile the convergence in distribution of the empirical measure with the Wasserstein-Lipschitz continuity of the coefficients gives an additional technical difficulty to the proof.

{\bf Organization of the paper.} In Section~\ref{secnota}, we state the assumptions and formulate the main results. Section \ref{sec:3} is devoted to the proof of Theorem~\ref{existencesolution}. The proofs of Theorems~\ref{convergenceXNi} and~\ref{convergencemuN} are gathered in Section \ref{sec:4}. Finally, in Section \ref{sec:multipop} we discuss extensions of our results to the frame of multi-populations where the particles are organized within clusters.

{\bf General notation.}
Throughout this paper we shall use the following notation. Given any measurable space $ (S, \mathcal S), $ $\mathcal{P} (S)$ denotes the set of all probability measures on $ (S, \mathcal S),$ endowed with the topology of weak convergence. For $p\in\n^*,$ $\mathcal{P}_p(\r)$ denotes the set of probability measures on $\r$ that have a finite moment of order~$p$.  
For two probability measures $\nu_1, \nu_2 \in \mathcal{P}_p (\r),$ the Wasserstein distance of order $p$ between $\nu_1$ and $\nu_2$  is defined as
$$
W_p(\nu_1,\nu_2)=\inf_{\pi\in\Pi(\nu_1,\nu_2)}\left( \int_S\int_S |x-y|)^p \pi(dx,dy) \right)^{1/p} ,
$$
where $\pi$ varies over the set $\Pi(\nu_1,\nu_2)$ of all probability measures on the product space $\r\times \r$ with marginals $\nu_1$ and $\nu_2$. Notice that  the Wasserstein distance of order $p$ between $\nu_1$ and $\nu_2$ can be rewritten as  the infimum of $E[| X - Y|^p]^{1/p}$ over all possible couplings $(X,Y)$ of the random elements $X$ and $Y$ distributed according to $\nu_1$ and $\nu_2$ respectively, i.e.
$$
W_p(\nu_1,\nu_2)=\inf\left\{\esp{|X - Y|^p}^{1/p}: \mathcal{L}(X)=\nu_1\ \mbox{and} \ \mathcal{L}(Y)=\nu_2  \right\}.
$$
Moreover, $D(\r_+,\r)$ (or just $D$ for short) denotes the space of c\`adl\`ag functions from $\r_+$ to $\r$, endowed with the Skorokhod metric, and $C$ and $K$ denote arbitrary positive constants whose values can change from line to line in an equation. We write $C_\theta$ and $K_\theta$ if the constants depend on some parameter $\theta.$
Finally, for any $n,p\in\n^*,$ we note $C_b^n(\r^p)$ (resp. $C_b^n(\r^p,\r_+)$) the set of real-valued functions $g$ (resp. non-negative functions $g$) defined on $\r^p$ which are $n$ times continuously differentiable such that $g^{(k)}$ is bounded for each $0\leq k\leq n .$ 

\section{Assumptions and main results.}\label{secnota}
\subsection{Assumptions}
We start imposing a hypothesis under which equation~\eqref{XNi} admits a unique strong solution and which grants a Lipschitz condition on the coefficients of the SDE.

\begin{assu}
\label{XNiwellposed}$ $
\begin{description}
\item {i)} For all $x,y\in\r,m,m'\in\mathcal{P}_1(\r),$
$$|b(x,m)-b(y,m')|+|\sigma(x,m)-\sigma(y,m')|\leq C(|x-y|+W_1(m,m')).$$
\item {ii)} $f$ is bounded and strictly positive, and $ \sqrt{f} $ is Lipschitz, that is, for all $x,y\in\r,m,m'\in\mathcal{P}_1(\r),$
$$|\sqrt{f}(x,m)-\sqrt{f}(y,m')|\leq C(|x-y|+W_1(m,m')).$$
\item {iii)}
For all $x,x',y,y',u,v\in\r,m,m'\in\mathcal{P}_1(\r),$
$$|\Psi(x,y,m,u,v)-\Psi(x',y',m',u,v)|\leq M(u,v)(|x-x'|+|y-y'|+W_1(m,m')),$$
where $M:\r^2\rightarrow\r_+$ satisfies $\int_E M(u^1,u^2)^2d\nu(u)<\infty.$
\item {iv)}
$$\underset{x,y,m}{\sup}\int_E |\Psi(x,y,m,u^1,u^2)|d\nu(u)<\infty.$$
\end{description}
\end{assu}
Notice that $ f$ bounded together with $ \sqrt{f} $ Lipschitz implies that $f$ is Lipschitz as well. As a consequence, relying on Theorem~2.1 of \cite{graham_mckean-vlasov_1992}, Assumption~\ref{XNiwellposed} implies that equation~\eqref{XNi} admits a unique strong solution.

In order to prove the well-posedness of the limit equation~\eqref{barXi}, we need additional assumptions. Recall that $ \kappa^2 $ has been introduced in \eqref{kappacarre} above. 
\begin{assu}\label{barXiwellposed}$ $
\begin{description}
\item {i)}
$$\underset{x,y,m}{\inf}\kappa(x,y,m)>0,$$
\item {ii)} $$\underset{x,y,m}{\sup}\int_E \Psi(x,y,m,u^1,u^2)^2d\nu(u)<\infty.$$
\end{description}
\end{assu}
\begin{rem}\label{kappa2lip}
Using the third point of Assumption~\ref{XNiwellposed} we can prove that $\kappa^2$ is Lipschitz continuous with Lipschitz constant proportional to $$\left (\int_{E}M^2(u,v)d\nu(u,v)\times \sup_{x,y,m}\int_E\Psi^2(x,y,m,u,v) d\nu(u,v)\right )^{1/2}.$$
Assumption~\ref{barXiwellposed}.$i)$ allows then to prove that $\kappa$ is Lipschitz continuous. Assumption ~\ref{barXiwellposed}.$ii)$ gives that $\|\kappa\| _{\infty}:=\sup_{x,y,m}\kappa(x,y,m)<\infty.$
\end{rem}

To prove the convergence of the particle system $(X^{N,i})_{1\leq i\leq N}$ to the limit system, we need further assumptions on the function $\Psi.$
\begin{assu}
\label{hyppsi}$ $
\begin{description}
\item {i)} For all $x,y\in\r,m\in\mathcal{P}_1(\r),$
$$\int_E \Psi(x,y,m,u^1,u^2)d\nu(u)=0,$$
\item {ii)} $$ \int_E \underset{x,y,m}{\sup}|\Psi(x,y,m,u^1,u^2)|^3d\nu(u)<\infty.$$
\item {iii)} $b$ and $ \sigma $ are bounded.
\end{description}
\end{assu}

\begin{rem}
We assume the functions $b$ and $\sigma$ to be bounded to simplify the proofs of Lemmas~\ref{apriorimckeandiffu} and~\ref{suitethm43}. However the results of these lemmas still hold true under the following weaker assumption: there exists $C>0$ such that, for all $x\in\r,m\in\mathcal{P(\r)},$
$$|b(x,m)| + |\sigma(x,m)|\leq C(1+|x|).$$
In other words, $b$ and $\sigma$ are bounded w.r.t. the measure variable and sublinear w.r.t. the space variable.
\end{rem}

Let us give an example of a function $\Psi$ that satisfies all our assumptions and where the random quantity $\Psi$ depends on the difference of the states of the jumping and the receiving particle as well as on the average state of the system as follows
$$\Psi(x,y,m,u,v)=uv\ll(\eps+\frac{\pi}{2}+\arctan(x- y+\int_\r zdm(z))\rr),$$
 with $\nu=\mathcal{R}^{\otimes\n^*}$ and $\mathcal{R}= \frac12 ( \delta_{-1} + \delta_{+1} ) $ the Rademacher distribution. In the formula above, the variables $u$ and $v$ can be seen as spins such that the receiving particle is excited if the orientation of its spin is the same as the spin of the sending particle, and inhibited otherwise. $\Psi$ satisfies the Lipschitz condition of Assumption~\ref{XNiwellposed} because $\arctan$ is Lipschitz continuous. The hypothesis on the moments of $\Psi$ are also satisfied since $\mathcal{N}(0,1)$ has finite third moments and is centered. Finally, the first point of Assumption~\ref{barXiwellposed} holds true, because
\begin{multline*}
\kappa(x,y,m)^2=\int_E\Psi(x,y,m,u^1,u^2)^2d\nu(u)-\int_E\Psi(x,y,m,u^1,u^2)\Psi(x,y,m,u^1,u^3)d\nu(u)\\
=\ll(\eps+\frac{\pi}{2}+\arctan(x- y+\!\! \int_\r \!\!\!zdm(z))\rr)^2
\ll(\int_E \!\! (u^1u^2)^2d\nu(u)-\int_E u^1u^2u^1u^3d\nu(u)\rr)\\
=\ll(\eps+\frac{\pi}{2}+\arctan(x-y+\int_\r zdm(z))\rr)^2.
\end{multline*}

\subsection{Main results}

Our first main result is the well-posedness of the limit equation.

\begin{thm}
\label{existencesolution}
Under Assumptions~\ref{XNiwellposed} and~\ref{barXiwellposed}, equation~\eqref{barXi} admits a unique strong solution $\bar X^i  $ that possesses finite second moments. This solution also has finite fourth moments.
\end{thm}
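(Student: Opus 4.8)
The plan is to build the solution by a Picard iteration carried out at the level of the conditional laws, using the white-noise representation \eqref{eq:M} of the driving martingale measures. Fix a horizon $T>0$. I would start from the constant iterate $\bar X^{i,0}_t\equiv \bar X^i_0$ and, given $\bar X^{i,n}$, set $\mu^n_t:=\mathcal L(\bar X^{i,n}_t\mid W)$ with conditional distribution function $F^n_t(x)=\mathbb{P}(\bar X^{i,n}_t\le x\mid W)$. I would then freeze these laws to build $M^n,M^{i,n}$ through \eqref{eq:M}, keeping the \emph{same} underlying white noises $W,W^i$ at every step, and define $\bar X^{i,n+1}$ as the solution of \eqref{barXi} in which $\mu^n$ is plugged into the coefficients and $M^n,M^{i,n}$ are the drivers. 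Through the representation \eqref{eq:M} this intermediate equation is an SDE driven by the fixed white noises with coefficients that are random but predictable and Lipschitz in the space variable, the $L^2(\nu_1)$-norms of the Lipschitz constants of $\tilde\Psi$ and $\kappa$ being finite by Assumption~\ref{XNiwellposed}.iii and Remark~\ref{kappa2lip}; each step is therefore well posed by the standard theory for equations driven by martingale measures. Since $W^i,\beta^i$ and $\bar X^i_0$ are i.i.d.\ over $i$ and only $W$ is shared, the coordinates $\bar X^{i,n}$ stay conditionally i.i.d.\ given $W$, so that $\mu^n_t$ is genuinely their common conditional law at every stage.

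Next I would establish a priori moment bounds uniform in $n$. The drift and diffusion have linear growth by Assumption~\ref{XNiwellposed}.i, while the predictable quadratic variations of the two martingale-measure integrals are uniformly bounded: indeed $\int_\r\tilde\Psi(x,y,m,v)^2 d\nu_1(v)\le\int_E\Psi^2 d\nu$ and $\kappa^2$ are bounded by Assumption~\ref{barXiwellposed}.ii and Remark~\ref{kappa2lip}, and $f$ is bounded, so their integrals against $f(x,\mu^n_s)\mu^n_s(dx)$ contribute at most $Ct$. Combining Burkholder--Davis--Gundy with Gronwall then gives $\sup_n\mathbb{E}[\sup_{t\le T}|\bar X^{i,n}_t|^2]<\infty$, and the same computation applied to the fourth power yields the finite fourth moments, the point again being that the martingale quadratic variations are bounded so that the fourth-moment estimate closes.

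The heart of the proof is the contraction estimate for $g_n(t):=\mathbb{E}[\sup_{s\le t}|\bar X^{i,n+1}_s-\bar X^{i,n}_s|^2]$. Applying It\^o and BDG, the drift and $\beta^i$ terms are handled by the $W_1$-Lipschitz property together with $W_1\le W_2$. The delicate contribution is the difference of the two martingale-measure integrals, since $M^n$ and $M^{n-1}$ are different measures, built from the same $W$ through the distinct transports $F^{n,-1}_s$ and $F^{n-1,-1}_s$. Writing both in the white-noise representation and applying the It\^o isometry reduces their difference to an $L^2(dp\,d\nu_1)$ norm in which a term $\tilde\Psi(F^{n,-1}_s(p),\cdot)-\tilde\Psi(F^{n-1,-1}_s(p),\cdot)$ appears; its Lipschitz bound produces the factor $|F^{n,-1}_s(p)-F^{n-1,-1}_s(p)|$, and here I would invoke the key identity
$$\int_0^1 |F^{n,-1}_s(p)-F^{n-1,-1}_s(p)|^2\,dp = W_2(\mu^n_s,\mu^{n-1}_s)^2 \quad(\text{conditionally on }W),$$
together with $\mathbb{E}[W_2(\mu^n_s,\mu^{n-1}_s)^2]\le\mathbb{E}[|\bar X^{i,n}_s-\bar X^{i,n-1}_s|^2]$, the pair $(\bar X^{i,n}_s,\bar X^{i,n-1}_s)$ being a conditional coupling. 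This converts the discrepancy of the driving noises into a quantity controlled by $g_{n-1}$, leading to $g_n(t)\le C\int_0^t(g_n(s)+g_{n-1}(s))\,ds$; Gronwall and iteration then give $\sup_{t\le T}g_n(t)\le (CT)^n/n!\cdot g_0(T)$, so $(\bar X^{i,n})_n$ is Cauchy in $L^2(\Omega;C([0,T]))$.

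Finally I would pass to the limit: the limit process $\bar X^i$ inherits the conditional-i.i.d.\ structure, its conditional laws $\mu^n\to\mu=\mathcal L(\bar X^i\mid W)$ and distribution functions converge (again in $W_2$, hence in $L^2$ for the inverse functions), so the frozen coefficients and the measures $M^n,M^{i,n}$ converge and $\bar X^i$ solves \eqref{barXi}; the a priori bounds pass to the limit by Fatou. Uniqueness follows from the very same estimate applied to two solutions, yielding
$$\mathbb{E}[\sup_{s\le t}|\bar X_s-\bar X'_s|^2]\le C\int_0^t\mathbb{E}[\sup_{r\le s}|\bar X_r-\bar X'_r|^2]\,ds$$
and hence $0$ by Gronwall. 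I expect the main obstacle to be exactly the treatment of the difference of the two martingale measures in the contraction step: because the noises driving consecutive iterates are not identical but are coupled through the conditional quantiles $F^{n,-1}_s$, controlling this difference cleanly is what forces the Wasserstein--$L^2$ identity and is the technically most delicate point, both in the iteration and in the final passage to the limit.
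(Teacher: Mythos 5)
Your proposal is correct and follows essentially the same route as the paper: a Picard iteration with frozen conditional laws driven by the \emph{same} white noises $W,W^i$ at every step, Lipschitz bounds for $\sqrt{f}\,\tilde\Psi$ and $\sqrt{f}\,\kappa$, the identity $\int_0^1\bigl(F^{[n],-1}_s(p)-F^{[n-1],-1}_s(p)\bigr)^2dp=W_2(\mu^{[n]}_s,\mu^{[n-1]}_s)^2$ combined with the conditional coupling bound $\esp{W_2(\mu^{[n]}_s,\mu^{[n-1]}_s)^2}\leq\esp{(X^{i,[n]}_s-X^{i,[n-1]}_s)^2}$, Gronwall with factorial decay, fourth moments via Burkholder--Davis--Gundy and Fatou, and uniqueness by the same estimate. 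The only ingredient you elide is the preliminary measurability result (the paper's Lemma~\ref{lem:changevar}, which checks that $(\omega,s,p)\mapsto (F_s)^{-1}(p)$ is predictable and that conditioning on $\W$ agrees with conditioning on $\W_s$, so the iterated martingale measures genuinely have intensities $dt\cdot\mu^{[n]}_t(dx)\cdot\nu_1(dv)$ and $dt\cdot\mu^{[n]}_t(dx)$), which your appeal to ``standard theory'' implicitly assumes.
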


Our second main result states the convergence of $(X^{N,i})_{1\leq i\leq N}$ to $(\bar X^i)_{i\geq 1}$.

\begin{thm}
\label{convergenceXNi}
Under Assumptions~\ref{XNiwellposed},~\ref{barXiwellposed} and~\ref{hyppsi}, $(X^{N,i})_{1\leq i\leq N}$ converges to $(\bar X^i)_{i\geq 1}$ in distribution in $D(\r_+,\r)^{\n^*}.$
\end{thm}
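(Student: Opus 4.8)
The plan is to combine a tightness argument with the identification of every subsequential limit as a solution of the martingale problem attached to \eqref{barXi}, and then to close the argument through the uniqueness granted by Theorem~\ref{existencesolution}. Since the coefficients of \eqref{XNi} are symmetric in the particle labels and the initial positions are i.i.d., the finite system $(X^{N,i})_{1\le i\le N}$ is exchangeable for every $N$. By the classical equivalence for exchangeable systems (de Finetti together with Sznitman's criterion), convergence in distribution of $(X^{N,i})_{1\le i\le N}$ in $D(\r_+,\r)^{\n^*}$ is equivalent to the convergence in distribution of the empirical measures $\mu^N$, viewed as $\mathcal{P}(D)$-valued random variables, towards a random measure $\mu$; the limit is then the $\mu$-directed sequence, conditionally i.i.d. given $\mu$. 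Thus it suffices to prove the convergence of $\mu^N$ and to identify $\mu$ as the directing measure of \eqref{barXi}, with $\mu_t=\mathcal{L}(\bar X^1_t\mid W)$.

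First I would establish uniform-in-$N$ moment bounds: relying on Assumption~\ref{XNiwellposed}, on the boundedness of $b,\sigma$ and $f$ from Assumption~\ref{hyppsi}, and on the fact that, thanks to the centering in Assumption~\ref{hyppsi}.i), the jump term $J^{N,i}$ is already a martingale whose predictable bracket is controlled by $\|f\|_\infty\sup_{x,y,m}\int_E\Psi^2 d\nu$ (finite by Assumption~\ref{barXiwellposed}.ii)), a Gronwall argument yields $\sup_N\sup_{s\le t}\esp{|X^{N,i}_s|^2}<\infty$ and the higher moment controls needed below. These bounds feed the tightness proof and, crucially, provide the uniform integrability required to upgrade weak convergence of $\mu^N$ to convergence in the Wasserstein distance $W_1$. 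Tightness of each coordinate in $D$ then follows from Aldous's criterion: the drift and diffusion contributions are handled by their boundedness, while the jump martingale $J^{N,i}$, whose jumps are of size $O(N^{-1/2})$ and whose bracket increments are controlled uniformly, satisfies the required continuity-in-probability estimate; by exchangeability this upgrades to tightness of $(\mu^N)$ in $\mathcal{P}(\mathcal{P}(D))$.

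Next I would identify the subsequential limits. Fix a convergent subsequence with limit $((\bar X^i)_{i\ge1},\mu)$. For a test function $\phi\in C_b^2(\r)$, apply It\^o's formula to $\phi(X^{N,i}_t)$ along \eqref{XNi}. The drift and diffusion terms converge once $\mu^N$ is replaced by its limit, which is precisely where the reconciliation of the weak convergence of $\mu^N$ with the $W_1$-Lipschitz continuity of $b,\sigma,\sqrt f$ from Assumption~\ref{XNiwellposed} is used, together with the moment bounds above. The delicate point is the jump term: the bracket computations sketched after \eqref{barXi} show that the predictable quadratic variation of $J^{N,i}$ converges to $\int_0^t\int_\r f(x,\mu_s)\int_E\Psi(x,\bar X^i_s,\mu_s,u^1,u^2)^2 d\nu(u)\,\mu_s(dx)ds$, with the cross-brackets between distinct coordinates converging to the corresponding expression built from $\tilde\Psi$. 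The third-moment bound in Assumption~\ref{hyppsi}.ii) yields, via $\sum_{s\le t}|\Delta J^{N,i}_s|^2\un_{\{|\Delta J|>\delta\}}\le\delta^{-1}\sum_{s\le t}|\Delta J^{N,i}_s|^3$, the Lindeberg-type negligibility of the jumps, so the limiting martingales are continuous and Gaussian; representing them through white-noise martingale measures as in \eqref{eq:M} (following Walsh and El Karoui--M\'el\'eard) shows that $((\bar X^i)_i,\mu)$ solves the martingale problem associated with \eqref{barXi}.

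Finally, I would conclude by uniqueness: Theorem~\ref{existencesolution} provides a unique strong solution of \eqref{barXi}, which in particular forces uniqueness in law for the associated martingale problem; hence every subsequential limit has the same law and the whole sequence $(X^{N,i})_{1\le i\le N}$ converges in distribution in $D(\r_+,\r)^{\n^*}$ to $(\bar X^i)_{i\ge1}$. The main obstacle I expect is concentrated in the identification step, and is twofold. First, one must transfer the convergence in distribution of $\mu^N$, a priori only weak, into a form compatible with the Wasserstein-Lipschitz coefficients; this demands careful use of the uniform moment bounds and, most likely, of a Skorokhod representation to work almost surely. Second, one must correctly recover the martingale-measure and white-noise structure of the common noise $W$ from the centered small jumps, i.e. prove that the limiting bracket is exactly the intensity $dt\cdot\mu_t(dx)\cdot\nu_1(dv)$ prescribed in \eqref{eq:M} and that distinct coordinates share precisely the amount of common noise encoded by $\tilde\Psi$.
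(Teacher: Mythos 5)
Your architecture matches the paper's almost step for step: reduction to convergence of $\mu^N$ by exchangeability, uniform moment bounds, Aldous tightness, identification of subsequential limits via a martingale problem (stated in the paper for \emph{pairs} of coordinates, which is what makes your cross-bracket computation meaningful), the Skorokhod-representation upgrade from weak convergence of $\mu^N$ to $W_1$-convergence of the marginals, the third-moment control of the jumps, and the representation of the limit by white-noise martingale measures through El Karoui--M\'el\'eard. Up to that point your sketch is faithful to Sections~\ref{sec:4} and the Appendix, including the two difficulties you flag at the end.

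The genuine gap is your concluding step. You assert that the strong existence and pathwise uniqueness of Theorem~\ref{existencesolution} ``in particular forces uniqueness in law for the associated martingale problem.'' Pathwise uniqueness does not by itself yield uniqueness in law for a martingale problem; that transfer is a Yamada--Watanabe-type statement, and no off-the-shelf version applies here, where the driving martingale measures have intensity $dt\cdot\mu_t(dx)\cdot\nu_1(dv)$ with $\mu_t$ the conditional law of the unknown solution itself. Moreover, Lemma~\ref{representationmartingale} only produces, for each pair $(i,j)$, noises $\beta^{i,j,1},\beta^{i,j,2},W^{i,j,1},W^{i,j,2},W^{i,j}$ a priori depending on the pair; it is exchangeability of the limit system that allows the identification $W^{i,j}=W$, $W^{i,j,1}=W^i$, etc., a step absent from your sketch. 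The paper then closes the argument not by abstract uniqueness in law but by an explicit coupling: it introduces an auxiliary $N$-particle system $Z^{N,i}$ driven by the \emph{same} white noises $W,W^i$ and Brownian motions as the strong solution $\bar X^i$, with coefficients evaluated at the empirical measure $\mu^{Z,N}$, and shows by Gr\"onwall that both $\esp{(Z^{N,i}_t-\bar X^i_t)^2}$ and $\esp{(Z^{N,i}_t-\bar Y^i_t)^2}$ vanish as $N\to\infty$. The first limit uses the Glivenko--Cantelli theorem and the strong law of large numbers applied conditionally on $\W$ to get $W_1(\mu^{X,N}_s,\mu^X_s)\to 0$ a.s., with uniform integrability supplied by the fourth-moment bound \eqref{controlbarXmckeandiffu} --- which is precisely why Theorem~\ref{existencesolution} establishes fourth moments, a need your proposal does not anticipate. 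This yields $\bar Y^i=\bar X^i$ pathwise, hence $\mu=\mathcal{L}(\bar X^1|\W)$, without ever invoking well-posedness of the martingale problem in your sense. To repair your proof you would either have to prove such a Yamada--Watanabe statement in this conditional McKean--Vlasov, white-noise setting, or adopt the paper's coupling via $Z^{N,i}$.
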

In the above statement, we implicitly define $X^{N,i}:=0$ if $i>N.$

As the systems $(X^{N,i})_{1\leq i\leq N}$ ($N\in\n^*$) and $(\bar X^i)_{i\geq 1}$ are exchangeable, Theorem~\ref{convergenceXNi} is equivalent to
\begin{thm}
\label{convergencemuN}
Under Assumptions~\ref{XNiwellposed},~\ref{barXiwellposed} and~\ref{hyppsi}, the system $ (\bar X^i )_{ i \geq 1 } $ is exchangeable with directing measure $ \mu =\mathcal{L}(\bar X^1|W),$ where $ W$ is as in \eqref{eq:M}. Moreover, the sequence of empirical measures $$\mu^N:=N^{-1}\sum_{i=1}^N\delta_{X^{N,i}}$$ converges in law, as $\PP(D(\r_+,\r))$- valued random variables, to $\mu.$ 
\end{thm}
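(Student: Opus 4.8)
The plan is to deduce Theorem~\ref{convergencemuN} from Theorem~\ref{convergenceXNi} together with the structural properties of the limit system~\eqref{barXi}. The key observation is that Theorem~\ref{convergenceXNi} already gives the convergence in distribution of $(X^{N,i})_{1\le i\le N}$ to $(\bar X^i)_{i\ge 1}$ in $D(\r_+,\r)^{\n^*}$, so the remaining work is: (a) to identify the directing measure of the exchangeable limit sequence $(\bar X^i)_{i\ge 1}$ as $\mu=\mathcal{L}(\bar X^1|W)$, and (b) to transfer the convergence of the coordinates into convergence of the empirical measures $\mu^N$ as $\mathcal{P}(D(\r_+,\r))$-valued random variables.

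For step (a), I would first argue that $(\bar X^i)_{i\ge 1}$ is exchangeable. This should follow from the construction of the limit system: the coordinates $\beta^i$, $W^i$ are i.i.d.\ and independent of the common noise $W$ and of the i.i.d.\ initial data $(\bar X^i_0)_{i\ge 1}\sim\nu_0^{\otimes\n^*}$, and the equation~\eqref{barXi} has the same coefficient structure in each coordinate, with the common objects $\mu_t$, $M$, $F_s^{-1}$ shared across all $i$. Hence permuting the index $i$ permutes only the i.i.d.\ pieces, leaving the joint law invariant. By de~Finetti's theorem the sequence then admits a directing measure, namely the $\mathcal{P}(D)$-valued random measure $\mu:=\lim_{n\to\infty} n^{-1}\sum_{i=1}^n\delta_{\bar X^i}$, which equals the conditional law $\mathcal{L}(\bar X^1\mid\mathcal{G})$ given the exchangeable $\sigma$-field $\mathcal{G}$. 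The crucial point is to show that this directing measure coincides with $\mu_t=\mathcal{L}(\bar X^1_t\mid W)$ coordinatewise, i.e.\ that conditioning on $W$ already renders the coordinates independent. I would establish conditional independence of $(\bar X^i)_{i\ge 1}$ given $W$ by noting that, conditionally on $W$, the measure-valued process $(\mu_t)$ and the martingale measure $M$ (hence $F_s^{-1}$) are frozen/measurable, so that~\eqref{barXi} decouples into equations driven by the independent idiosyncratic noises $\beta^i, M^i$; conditional uniqueness of the decoupled equations (a conditional version of Theorem~\ref{existencesolution}) then yields that the $\bar X^i$ are conditionally i.i.d.\ given $W$, with common conditional law $\mu$. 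This identifies the directing measure with $\mu=\mathcal{L}(\bar X^1\mid W)$.

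For step (b), I would use the standard equivalence between propagation of chaos for exchangeable systems and convergence of empirical measures: if $(X^{N,i})_{1\le i\le N}$ is exchangeable and converges in law in $D^{\n^*}$ to an exchangeable limit with directing measure $\mu$, then the empirical measures $\mu^N=N^{-1}\sum_{i=1}^N\delta_{X^{N,i}}$ converge in law, as $\mathcal{P}(D)$-valued random variables, to $\mu$. Concretely, I would test against bounded continuous functionals: for any $k$ and any bounded continuous $\phi_1,\dots,\phi_k$ on $D$, convergence of the finite-dimensional marginals from Theorem~\ref{convergenceXNi} gives
\[
\esp{\prod_{j=1}^k \langle\mu^N,\phi_j\rangle}\longrightarrow \esp{\prod_{j=1}^k \langle\mu,\phi_j\rangle},
\]
using exchangeability to replace products of empirical averages by averages over distinct indices up to $O(1/N)$ diagonal corrections. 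Since $\mathcal{P}(D)$ is a Polish space and the moment functionals of the above form are convergence-determining on $\mathcal{P}(\mathcal{P}(D))$, this yields $\mu^N\Rightarrow\mu$ in law.

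I expect the main obstacle to be step (a), specifically the rigorous proof that conditioning on $W$ alone suffices to make the coordinates independent with common conditional law $\mu_t$. The delicate point is that $\mu_t$, the martingale measure $M$, and the conditional quantile $F_s^{-1}$ are all defined self-consistently in terms of $\mathcal{L}(\bar X^i_t\mid W)$, so one must carefully verify that $\sigma(W)$ is exactly the common noise and that no additional information is hidden in the joint law of $(\bar X^i)_{i\ge 1}$ beyond $W$. I would handle this by freezing a realization of $W$, viewing~\eqref{barXi} as a family of SDEs with random-but-$W$-measurable coefficients, and invoking a conditional strong-uniqueness argument (paralleling the Picard iteration of Theorem~\ref{existencesolution}) to conclude that the conditional law $\mathcal{L}(\bar X^1\mid W)$ is the unique fixed point; the identification of the exchangeable $\sigma$-field with $\sigma(W)$ then follows because the idiosyncratic noises contribute nothing to the tail.
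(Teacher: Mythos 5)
Your proposal contains a genuine structural gap: it is circular with respect to how the convergence is actually established. You take Theorem~\ref{convergenceXNi} as given and deduce Theorem~\ref{convergencemuN} from it; but in the paper the implication runs in the opposite direction --- Theorem~\ref{convergenceXNi} is proved \emph{as a consequence} of Theorem~\ref{convergencemuN}, via Proposition~(7.20) of \cite{aldous_exchangeability_1983}. As the paper notes just before stating Theorem~\ref{convergencemuN}, the two theorems are equivalent for exchangeable systems, so your step (b) is exactly one direction of that equivalence, and your step (a) reproduces the content of the remark following Theorem~\ref{convergencemuN} (conditionally on $W$ the $\bar X^i$ are i.i.d., so Lemma~(2.12) of \cite{aldous_exchangeability_1983} identifies the directing measure). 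Both steps are fine in themselves, but together they only reduce the statement to an equivalent statement: nowhere do you prove that the particle system converges at all.

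The missing content is the entire analytic core of the paper's proof of Theorem~\ref{convergencemuN}: (i) tightness of $(\mu^N)_N$ on $\PP(D(\r_+,\r))$, reduced by exchangeability to tightness of $(X^{N,1})_N$ via Aldous' criterion and the a priori bound of Lemma~\ref{apriorimckeandiffu}; (ii) the martingale problem $(\mathcal{M})$, formulated on pairs of particles to capture the correlations, together with Theorem~\ref{convergingsubsequence} showing that the law of every limit point of $(\mu^N)$ solves $(\mathcal{M})$ --- this requires It\^o's formula, third-moment Taylor estimates yielding an error of order $N^{-1/2}$, and the delicate Lemma~\ref{suitethm43} reconciling the weak convergence of $\mu^N$ with the $W_1$-Lipschitz continuity of the coefficients; (iii) the representation Lemma~\ref{representationmartingale}, based on Theorems~III-6 and III-10 of \cite{el_karoui_martingale_1990}, turning any solution of $(\mathcal{M})$ into a weak solution of an SDE driven by white noises; and (iv) a coupling argument with the auxiliary system $Z^{N,i}$, Gr\"onwall's lemma, and Glivenko--Cantelli applied conditionally on $\W$, which shows that the limit coordinates coincide with $(\bar X^i)_{i\geq 1}$ and hence that any subsequential limit of $(\mu^N)$ is $\mathcal{L}(\bar X^1|\W)$. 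Your ``conditional strong uniqueness'' idea is a reasonable germ of step (iv), but it is applied to the wrong object: the work is to identify an \emph{arbitrary} subsequential limit of $(\mu^N)$ with the law generated by \eqref{barXi}, not merely to analyze the already-constructed limit system. Unless you supply an independent proof of Theorem~\ref{convergenceXNi}, your argument establishes nothing beyond the equivalence the paper already states.
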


\begin{rem}
In Theorem~\ref{convergencemuN}, it is easy to prove that $\mathcal{L}(\bar X^1|W)$ is the directing measure of the system $(\bar X^i)_{i\geq 1}$. Indeed, it is sufficient to notice that conditionally on $W,$ the variables $\bar X^i$ ($i\geq 1$) are i.i.d. and to apply Lemma~(2.12) of \cite{aldous_exchangeability_1983}.
\end{rem}

The proof of Theorem~\ref{convergencemuN} is similar to the proof  of Theorem~1.7 of \cite{erny_conditional_2020}. It consists in showing that $(\mu^N)_N$ is tight on $\PP (D(\r_+,\r)),$ and that each converging subsequence converges to the same limit, using a convenient martingale problem. For this reason, in what follows we just give the proofs that substantially change compared to our previous paper. For the other proofs, we just give the main ideas and cite precisely the corresponding statement of \cite{erny_conditional_2020} that allows to conclude.

\section{Proof of Theorem~\ref{existencesolution}}\label{sec:3} 

The following Lemma shows that the definition \eqref{eq:M} indeed defines the right martingale measures.

\begin{lem}\label{lem:changevar}
Let $\nu_1$ be a probability measure on~$\r$. Moreover, let $(\Omega , {\cal A}, P )$ be a probability space, $(\mathcal{G}_t)_t$ be a filtration on it, $(\mathcal{F}_t)_t$ be a sub-filtration of $ (\mathcal{G}_t)_t$ and $W$ (resp. $W^1$) be a $(\mathcal{G}_t)_t-$white noise on $\r_+\times[0,1]\times\r$ (resp. $\r_+\times[0,1]$) of intensity $dt\cdot dp\cdot\nu_1(dv)$ (resp. $dt\cdot dp$). Let $X$ be a continuous $\r-$valued process which is $(\mathcal{G}_t)_t$-adapted and $F_s(x):=\mathbb{P}(X_s\leq x|\mathcal{F}_s).$  Moreover, we suppose that for all $s>0,$ $\mathbb{P}(X_s\leq x|\mathcal{F}_s)=\mathbb{P}(X_s\leq x|\mathcal{F}_{\infty}),$ where $\F_{\infty}=\sigma\{\F_t,\ t\geq 0\}.$

Define for any $ A, B  \in \mathcal B ( \r)  , $ 
$$M^1_t(A):=\int_0^t\int_0^1 \un_A((F_{s})^{-1}(p))dW^1(s,p) , \quad  M_t(A\times B):=\int_0^t\int_0^1\int_\r \un_A((F_{s})^{-1}(p))\un_B(v)dW(s,p,v)$$
Then $M^1 $ and $M$ are martingale measures with respective intensities $dt\cdot \mu_t(dx)$ and $dt\cdot \mu_t(dx)\cdot \nu_1(dv)$, where $\mu_t:=\mathcal{L}(X_t|\mathcal{F}_t).$ 
\end{lem}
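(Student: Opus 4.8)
The plan is to verify the defining properties of a martingale measure in the sense of \cite{walsh_introduction_1986} (see also \cite{el_karoui_martingale_1990}), relying on the characterization through the covariance functional. It suffices to check three things: (i) for each fixed Borel set $A$ (resp. $A\times B$) the stochastic integral defining $M^1_t(A)$ (resp. $M_t(A\times B)$) is a well-defined square-integrable $(\G_t)_t$-martingale in $t$; (ii) the map $A\mapsto M^1_t(A)$ (resp. $A\times B\mapsto M_t(A\times B)$) is countably additive in the $L^2$-sense, so that we genuinely obtain an $L^2$-valued measure; and (iii) the predictable covariation is the one claimed. Since the two constructions decouple, I would treat $M^1$ in detail and indicate the identical modifications for $M$.

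First I would address (i) and (ii). The integrand $(s,p)\mapsto\un_A(F_s^{-1}(p))$ is bounded by $1$, so square-integrability is immediate once predictability is granted. Predictability is where the hypothesis $\pro{X_s\leq x|\F_s}=\pro{X_s\leq x|\F_\infty}$ enters: it guarantees that we may choose a version of $F_s$ measurable with respect to the fixed $\sigma$-algebra $\F_\infty$, so that $F_s^{-1}(p)$, and hence $\un_A(F_s^{-1}(p))$, is a legitimate $(\G_t)_t$-predictable integrand against the $(\G_t)_t$-white noise $W^1$; the white-noise stochastic integral is then a square-integrable martingale in $t$. Countable additivity in $A$ follows from the corresponding property of the white noise together with the $L^2$-isometry, since for pairwise disjoint $A_n$ the integrands $\un_{A_n}(F_s^{-1}(\cdot))$ are orthogonal in $L^2(dp\,ds)$.

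The core of the proof is (iii). By the isometry for white-noise integrals, for Borel sets $A,A'$,
$$\langle M^1_\cdot(A),M^1_\cdot(A')\rangle_t=\int_0^t\int_0^1\un_A(F_s^{-1}(p))\un_{A'}(F_s^{-1}(p))\,dp\,ds=\int_0^t\int_0^1\un_{A\cap A'}(F_s^{-1}(p))\,dp\,ds.$$
At this point I would invoke the standard quantile identity: for any (random) distribution function $F_s$ with generalized inverse $F_s^{-1}$, the pushforward of Lebesgue measure on $[0,1]$ by $F_s^{-1}$ is exactly the law with distribution function $F_s$, i.e. $\int_0^1\un_C(F_s^{-1}(p))\,dp=\mu_s(C)$ for every Borel $C$, applied here $\omega$ by $\omega$. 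Since $F_s$ is the conditional distribution function of $X_s$ given $\F_s$, one has $\mu_s=\mathcal{L}(X_s|\F_s)$, whence
$$\langle M^1_\cdot(A),M^1_\cdot(A')\rangle_t=\int_0^t\mu_s(A\cap A')\,ds=\int_0^t\int_\r\un_{A\cap A'}(x)\,\mu_s(dx)\,ds,$$
which identifies the intensity $dt\cdot\mu_t(dx)$ and shows in particular that disjoint sets yield orthogonal martingales. For $M$ the same computation, now using the isometry on $[0,1]\times\r$ and the product structure $dp\,\nu_1(dv)$, gives
$$\langle M_\cdot(A\times B),M_\cdot(A'\times B')\rangle_t=\int_0^t\mu_s(A\cap A')\,\nu_1(B\cap B')\,ds,$$
which is exactly the intensity $dt\cdot\mu_t(dx)\cdot\nu_1(dv)$.

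The main obstacle I anticipate is not the covariance computation, which is essentially inverse-transform sampling, but the careful justification of step (i): one must ensure that $F_s^{-1}(p)$ is a genuine predictable integrand against the $(\G_t)_t$-white noise even though $F_s$ is built from the conditional law of $X_s$. This is precisely what the assumption $\pro{X_s\leq x|\F_s}=\pro{X_s\leq x|\F_\infty}$ is designed to handle, by pinning $F_s$ to an $\F_\infty$-measurable version; I would spell out this measurability point and the resulting applicability of Walsh's integration theory before running the isometry computations.
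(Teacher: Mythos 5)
Your outline of the covariance step is correct and matches the paper: the isometry for white-noise integrals plus the quantile pushforward identity $\int_0^1\un_C(F_s^{-1}(p))\,dp=\mu_s(C)$ (applied $\omega$ by $\omega$) gives the intensities $dt\cdot\mu_t(dx)$ and $dt\cdot\mu_t(dx)\cdot\nu_1(dv)$ exactly as in the paper's last display. But the heart of the lemma --- and essentially the entirety of the paper's proof --- is the predictability of $(\omega,s,p)\mapsto F_s^{-1}(p)$, and here your proposed mechanism does not work as stated. You claim the hypothesis $\pro{X_s\le x|\F_s}=\pro{X_s\le x|\F_\infty}$ ``guarantees that we may choose a version of $F_s$ measurable with respect to the fixed $\sigma$-algebra $\F_\infty$, so that $F_s^{-1}(p)$ \dots is a legitimate predictable integrand.'' This is a non sequitur: $\F_\infty$-measurability of $F_s(x)$ for each \emph{fixed} $s$ says nothing about joint measurability in $(\omega,s)$, and $\F_\infty$-measurable objects are in general not $\G_s$-adapted (one has $\F_\infty\not\subseteq\G_s$), so neither predictability nor even joint measurability of $(\omega,s,x)\mapsto F_s(x)$ follows from your statement. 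You flag at the end that this point would need to be ``spelled out,'' but the argument you sketch for it would not close.

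What the assumption actually buys, and what the paper's proof exploits, is \emph{continuity in $s$} under a fixed conditioning $\sigma$-field. The paper writes $F_s(x)=\esp{\phi(x,X_s)|\F_s}$ with $\phi(x,y)=\uno{x\le y}$, approximates $\phi$ monotonically by sums $\sum_k c_k\phi_k(x)\psi_k(y)$ of products of bounded Borel functions, and then approximates each $\psi_k$ by bounded continuous $\psi_{k,m}$. For these, the identity $\esp{\psi_{k,m}(X_s)|\F_s}=\esp{\psi_{k,m}(X_s)|\F_\infty}$ is decisive: the right-hand side is continuous in $s$ (continuity of $X$ plus dominated convergence under the \emph{fixed} $\sigma$-field $\F_\infty$, which would fail for the moving family $\F_s$), while the left-hand side shows the process is $(\F_s)_s$-, hence $(\G_s)_s$-adapted; continuous plus adapted gives predictability, and monotone limits preserve it. The measurability of the inverse then follows from the elementary section argument $\{(\omega,s,p):F_s^{-1}(p)\ge x\}=\{(\omega,s,p):F_s(x)\le p\}$. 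So the missing idea in your proposal is precisely this ``fixed $\sigma$-field $\Rightarrow$ continuity in $s$ $\Rightarrow$ predictability'' chain; without it, the integrals defining $M^1$ and $M$ are not even known to be well defined, and the isometry computations you run afterwards have nothing to stand on.
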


\begin{proof}
We only show the result for the martingale measure $M^1$. The main part of the proof consists in showing that the process $(\omega,s,p)\in\Omega\times\r_+\times[0,1]\mapsto (F_s)^{-1}(p)$ is $\mathcal{P}\otimes\mathcal{B}([0,1])-$measurable, with $\mathcal{P}$ the predictable sigma field related to the filtration $(\mathcal{G}_t)_t$.

To begin with, let us prove that $(\omega,s,x)\mapsto F_s(x)$ is $\mathcal{P}\otimes\mathcal{B}(\r)-$measurable. We write
$$F_s(x)=P(X_s\leq x|\mathcal{F}_s)=\esp{\phi(x,X_s)|\mathcal{F}_s},$$
where $\phi(x,y):=\uno{x\leq y}.$ As $\phi$ is product measurable and bounded, it is the limit of functions of the form
$$\sum_{k=1}^nc_k\phi_k(x)\psi_k(y),$$
where the functions $\phi_k,\psi_k$ ($1\leq k\leq n$) are Borel measurable and bounded. This limit can be taken to be increasing such that, by monotone convergence,
$$\esp{\phi(x,X_s)|\mathcal{F}_s}=\underset{n}{\lim}\sum_{k=1}^n c_k \phi_k(x)\esp{\psi_k(X_s)|\mathcal{F}_s}.$$
Then, as $\psi_k$ is a bounded and Borel function, it can be approximated by an increasing sequence of bounded and continuous functions $\psi_{k,m}.$ Then, as for every $n,m,$ the process
 $$(\omega,s,x)\mapsto \sum_{k=1}^nc_k\phi_k(x)\esp{\psi_{k,m}(X_s)|\mathcal{F}_s}=\sum_{k=1}^nc_k\phi_k(x)\esp{\psi_{k,m}(X_s)|\mathcal{F}_{\infty}}$$
is continuous in~$s$ and $(\mathcal{F}_s)_s-,$ whence $(\mathcal{G}_s)_s-$adapted, it is $\mathcal P\otimes\mathcal{B}(\r)-$measurable.

Let $x\in\r$ be fixed. It is sufficient to show that $\{(\omega,s,p):(F_s)^{-1}(p)\geq x\}$ is measurable. Let us write
$$\{(\omega,s,p):(F_s)^{-1}(p)\geq x\}=\{(\omega,s,p):F_s(x)\leq p\}=\{(\omega,s,p):\phi(F_s(x),p)>0\},$$
where $\phi(x,p):=\uno{x\leq p}$ is product measurable.

Then, the measurability of $\{(\omega,s,p):(F_s)^{-1}(p)\geq x\}$ is a consequence of that of $(\omega,s,p)\mapsto \phi(F_s(x),p)$ w.r.t. $\mathcal{P}\otimes\mathcal{B}([0,1]).$

As a consequence, the process $(\omega,s,p)\in\Omega\times\r_+\times[0,1]\mapsto (F_s)^{-1}(p)$ is $\mathcal{P}\otimes\mathcal{B}([0,1])-$measurable. The rest of the proof consists in writing
$$\langle M^1_\cdot (A)\rangle_t=\int_0^t\int_0^1\un_A((F_s)^{-1}(p))dpds=\int_0^t\mu_s(A)ds.$$
The last inequality above is a classical property of the generalized inverse of the distribution function (see e.g. Fact 1 in Section 8 of \cite{major_invariance_1978}).
\end{proof}

\subsection{Construction of a strong solution of \eqref{barXi} - proof of Theorem \ref{existencesolution}}
We construct a strong solution of \eqref{barXi} using a Picard iteration.
 Let $ \beta^i, i\in\n^*,$ be independent one-dimensional Brownian motions. Let $W,W^i, \ i\in\n^*,$ be independent white noises on respectively $\r_+\times [0,1]\times \r$ and $\r_+\times [0,1]$ with respective intensities $dt\cdot dp\cdot \nu_1(dv)$ and $dt\cdot dp,$ independent of the $ \beta^i.$ We suppose that all these processes are defined on the same probability space $(\Omega,\F,P)$ carrying also i.i.d. random variables $X^i_0,\, i\in \n^*,$ which are independent of the $ \beta^i, W, W^i .$ 
Define for all $t\geq 0,$ $$\G_t:=\sigma\{\, \beta^{i}_v; \,  W(]u,v]\times A\times B);\,  W^i(]u,v]\times A);\,  0<u<v\leq t;\,  A\in\B(\r);\,  B\in\B([0,1]);\, i\in\n^*\, \};$$
$$\W_t:=\sigma\{\, W(]u,v]\times A\times B);  u<v\leq t;\  A\in\B(\r), B\in\B([0,1])\, \};$$
$$\W:=\sigma\{W_t;\  t\geq 0\}.$$

{\it Step~1.}  Fix an $i\in\n^*,$
and  introduce 
\begin{align*}
X^{i,[0]}_t:=&X^i_0,\\
\mu^{[0]}_t:=&\mathcal{L}(X^i_0), \; 
F^{[0]}_t(x):=\pro{X_0\leq x|\W},\\
M^{[0]}([0,t]\times A\times B):=&\int_0^t\int_0^1 \int_{\r}\un_A((F^{[0]}_s)^{-1}(p))\un_B(v)dW(s,p,v)\\
M^{i,[0]}([0,t]\times A):=&\int_0^t\int_0^1 \un_A((F^{[0]}_s)^{-1}(p))dW^i(s,p).
\end{align*}

Assuming everything is defined at order $n\in\mathbb{N},$ we introduce
\begin{align}
X^{i,[n+1]}_t:=&\int_0^t b(X^{i,[n]}_s,\mu^{[n]}_s)ds+\int_0^t\sigma(X^{i,[n]}_s,\mu^{[n]}_s)d\beta^i_s\label{Xin}\\
&+\int_0^t\int_\r\int_\r\sqrt{f(x,\mu^{[n]}_s)}\tilde\Psi(x,X^{i,[n]}_s,\mu^{[n]}_s,v)dM^{[n]}(s,x,v)\nonumber\\
&+\int_0^t\int_\r\sqrt{f(x,\mu^{[n]}_s)}\kappa(x,X^{i,[n]}_s,\mu^{[n]}_s)dM^{i,[n]}(s,x),\nonumber\\
\mu^{[n+1]}_s:=&\mathcal{L}(X_s^{i,[n+1]}|\W), \, 
F_s^{[n+1]}(x):= \mathbb{P}(X_s^{i,[n+1]}\leq x|\W),\nonumber\\
M^{[n+1]}([0,t]\times A\times B):=&\int_0^t\int_0^1\int_\r\un_A((F_s^{[n+1]})^{-1}(p))\un_B(v)dW(s,p,v),\nonumber\\
M^{i,[n+1]}([0,t]\times A):=&\int_0^t\int_0^1\un_A((F_s^{[n+1]})^{-1}(p))dW^i(s,p).\nonumber
\end{align}
Note that $\forall t>0,$ $$\W=\sigma(\W_t;\,  \W_{]t;\infty[})$$
where $$\W_{]t;\infty[}:=\sigma\{]W(]u,v]\times A);\ , t<u<v;\,   A\in\B(\r)\}.$$ Remember also that white noises are processes with independent increments, more precisely, for all $A,A'$ in $\B([0,1])$, for all $B,B'$ in $\B(\r)$,
$W(]u,v]\times A\times B)$ and $W(]u',v']\times A'\times B')$ are independent if $]u;v]\cap ]u';v']=\emptyset.$

Using this last remark, we see  that by construction $X^{i,[n+1]}_t$ is independent from $\W_{]t;\infty[}$, and as a consequence, $\mathbb{P}(X_s^{i,[n+1]}\leq x|\W)=\mathbb{P}(X_s^{i,[n+1]}\leq x|\W_s)$. Taking $\F_t=\W_t$ and $\F_{\infty}=\W$ we see that all assumptions of Lemma \eqref{lem:changevar} are satisfied. Hence,  for each $n\in\n;\, i\in\n^* $ the martingale measures $M^{[n]}$ and $M^{i,[n]}$ are well defined and have respectively the intensities $dt\cdot \mu^{[n]}_t(dx)\cdot \nu_1(dv)$ and $dt\cdot \mu^{[n]}_t(dx)$, where $\mu^{[n]}_t:=\mathcal{L}(X^{i, [n]}_t|\W)=\mathcal{L}(X^{i, [n]}_t|\W_t)$    .  

In what follows, we shall consider $u^{[n]}_t:=\esp{\ll(X^{i,[n+1]}_t-X^{i,[n]}_t\rr)^2}.$ Let us introduce
$$h(x,y,m,v):=\sqrt{f(x,m)}\tilde\Psi(x,y,m,v) \mbox{ and } g(x,y,m):=\sqrt{f(x,m)}\kappa(x,y,m).$$
Note that the assumptions of the theorem  guarantee that $h$ and $g$ are Lipschitz continuous. Indeed, using Assumption~  \ref{XNiwellposed} $(ii)$ and $(iii)$, 
for all $x,y,x',y',v\in\r,m,m'\in\mathcal{P}_1(\r),$
$$|h(x,y,m,v)-h(x',y',m',v)|\leq C(v)(|x-x'|+|y-y'|+W_1(m,m')),$$
where 
$$C(v):=\int_E M(u,v)\nu_1(du)+\int_{E} |\Psi(x,y,m,v,u)|\nu_1(du).$$

Using Jensen's inequality together with Assumptions~ \ref {XNiwellposed} $(iii)$  and \ref {barXiwellposed} $(ii)$ we see that $C$ 
satisfies $\int_\r C(v)^2dv<\infty.$ Moreover, using Assumption~ \ref {XNiwellposed} $(ii)$  together with Remark~ \ref{kappa2lip},   for all $x,x',y,y'\in\r,m,m'\in\mathcal{P}_1(\r),$
$$|g(x,y,m)-g(x',y',m')|\leq K(|x-x'|+|y-y'|+W_1(m,m')),$$
where $K\leq C(\|\kappa\|_{\infty}+\sqrt{\|f\|_{\infty}}).$

{\it Step~2.} We now prove that our Picard scheme converges. Classical arguments imply the existence of a constant $C > 0 $ such that 

\begin{multline}\label{calcul2}
\frac{1}{C} \ll(X^{i,[n+1]}_t-X^{i,[n]}_t\rr)^2\leq \\
\ll(\int_0^t(b(X^{i,[n]}_s,\mu^{[n]}_s)-b(X^{i,[n-1]}_s,\mu^{[n-1]}_s))ds\rr)^2+\ll(\int_0^t(\sigma(X^{i,[n]}_s,\mu^{[n]}_s)-\sigma(X^{i,[n-1]}_s,\mu^{[n-1]}_s))d\beta^i_s\rr)^2\\
+\ll(\int_0^t\int_\r\int_\r h(x,X^{i,[n]}_s,\mu^{[n]}_s,v)dM^{[n]}(s,x,v)-\int_0^t\int_\r\int_\r h(x,X^{i,[n-1]}_s,\mu^{[n-1]}_s,v) dM^{[n-1]}(s,x,v)\rr)^2\\
+\ll(\int_0^t\int_\r g(x,X^{i,[n]}_s,\mu^{[n]}_s)dM^{i,[n]}(s,x)-\int_0^t\int_\r g(x,X^{i,[n-1]}_s,\mu^{[n-1]}_s) dM^{i,[n-1]}(s,x)\rr)^2\\
\leq t\int_0^t\ll(b(X^{i,[n]}_s,\mu^{[n]}_s)-b(X^{i,[n-1]}_s,\mu^{[n-1]}_s)\rr)^2ds+\ll(\int_0^t(\sigma(X^{i,[n]}_s,\mu^{[n]}_s)-\sigma(X^{i,[n-1]}_s,\mu^{[n-1]}_s))d\beta^i_s\rr)^2\\
+\ll(\int_0^t\int_0^1\int_\r \ll[h((F^{[n]}_s)^{-1}(p),\bar X^{i,[n]}_s,\mu^{[n]}_s,v) - h((F^{[n-1]}_s)^{-1}(p),\bar X^{i,[n-1]}_s,\mu^{[n-1]}_s,v) \rr]dW(s,p,v)\rr)^2\\
+ \ll(\int_0^t\int_0^1 \ll[g((F^{[n]}_s)^{-1}(p),\bar X^{i,[n]}_s,\mu^{[n]}_s) - g((F^{[n-1]}_s)^{-1}(p),\bar X^{i,[n-1]}_s,\mu^{[n-1]}_s) \rr]dW^i(s,p)\rr)^2 .
\end{multline}
Using Burkholder-Davis-Gundy's inequality to control the expectation of the stochastic integrals above, and using the fact that for all $\mu,\nu\in\mathcal{P}_2(\r),$
$$W_1(\mu,\nu)\leq W_2(\mu,\nu),$$
we have that
\begin{multline}\label{calcul1}
u^{[n]}_t\leq C(1+t)\int_0^t\esp{(X^{i,[n]}_s-X^{i,[n-1]}_s)^2}ds+C(1+t)\int_0^t\esp{W_2(\mu^{[n]}_s,\mu^{[n-1]}_s)^2}ds\\
\\+C\int_0^t\esp{\int_0^1((F_s^{[n]})^{-1}(p)-(F_s^{[n-1]})^{-1}(p))^2dp}ds.
\end{multline}
A classical result (see e.g. Theorem~8.1 of \cite{major_invariance_1978}) states that, if $F,G$ are two distribution functions with associated probability measure $ \mu $ and $ \nu, $ respectively, then
$$\int_0^1(F^{-1}(p)-G^{-1}(p))^2dp=\underset{X\sim \mu,Y\sim \nu }{\inf}\esp{(X-Y)^2}=W_2(\mu, \nu ),$$
where the infimum is taken over all possible couplings $ ( X, Y) $ of $ \mu$ and $ \nu.$

This implies that
$$\int_0^1((F_s^{[n]})^{-1}(p)-(F_s^{[n-1]})^{-1}(p))^2dp=W_2(\mu^{[n]}_s,\mu^{[n-1]}_s)^2.$$
Since $X^{i,[n]}$ and $X^{i,[n-1]}$, conditionally on $\W,$ are respectively realizations of $F^{i,[n]}$ and $F^{i,[n-1]}$,  we have that, for every $s\geq 0,$ almost surely,
$$W_2(\mu^{[n]}_s,\mu^{[n-1]}_s)^2\leq \esp{(X^{i,[n]}_s-X^{i,[n-1]}_s)^2|\W}.$$
Integrating with respect to $W$  implies that 
$$ \esp{\int_0^1((F_s^{[n]})^{-1}(p)-(F_s^{[n-1]})^{-1}(p))^2dp} \le  \esp{(X^{i,[n]}_s-X^{i,[n-1]}_s)^2}.$$
Consequently, we have shown that there exists some constant $C>0$ such that, for all $t\geq 0$,
\begin{equation}
\label{ununmoinsun}
u^{[n]}_t\leq C(1+t)\int_0^tu^{[n-1]}_sds.
\end{equation}
Classical computations then give
$$u^{[n]}_t\leq C^n(1+t)^n\frac{t^n}{n!}.$$
Now, introducing $v^{[n]}_t:=2^{n}u^{[n]}_t,$ we have that
$$\sum_{n\geq 0}v^{[n]}_t<\infty.$$
Hence, using that for all  $x\in\r,\eps>0,$ $ |x|\leq\max(\eps,x^2/\eps)\leq \eps+ x^2/\eps,$ and applying this with $\eps=1/2^n$ and $x=X^{i,[n+1]}_t-X^{i,[n]}_t,$ we have
$$\sum_{n\geq 0}\esp{|X^{i,[n+1]}_t-X^{i,[n]}_t|}\leq \sum_{n\geq 0}\frac{1}{2^n}+\sum_{n\geq 0}v^{[n]}_t<\infty.$$
As a consequence, we can define, almost surely,
$$\bar X^i_t:=X^i_0+\sum_{n\geq 0}(X^{i,[n+1]}_t-X^{i,[n]}_t)<+\infty,$$
 and we know that $\esp{|\bar X^i_t - X^{i,[n]}_t|}$ vanishes as $n$ goes to infinity, and that $X^{i,[n]}_t$ converges almost surely to $\bar X^i_t$. 

{\it Step~3.} Let us prove that $\bar X^i$ has finite fourth moments. Let $w^{[n]}_t:=\esp{(X^{i,[n]}_t)^4}.$

By equation~\eqref{Xin}, we have
\begin{align}
\frac1C\ll(X^{i,[n]}_t\rr)^4\leq&\ll(\int_0^tb(X^{i,[n-1]}_s,\mu^{[n-1]}_s)ds\rr)^4+\ll(\int_0^t\sigma(X^{i,[n-1]}_s,\mu^{[n-1]}_s)d\beta^i_s\rr)^4\nonumber\\
&+\ll(\int_0^t\int_\r\int_\r h(x,X^{i,[n-1]}_s,\mu^{[n-1]}_s,v) dM^{[n-1]}(s,x,v)\rr)^4\nonumber\\
&+\ll(\int_0^t\int_\r g(x,X^{i,[n-1]}_s,\mu^{[n-1]}_s) dM^{i,[n-1]}(s,x)\rr)^4\nonumber\\
&=:A_1+A_2+A_3+A_4.\label{controlwnt}
\end{align}

First of all, let us note that our Lipschitz assumptions allow to consider the following control: for any $x\in\r,m\in\mathcal{P}_1(\r),$
$$|b(x,m)| \leq |b(x,m) - b(0,\delta_0)| + |b(0,\delta_0)| \leq C(1+|x|+W_1(\mu,\delta_0))= C\ll(1+|x| + \int_\r |y|dm(y)\rr),$$
and similar controls for the functions $\sigma,h,g.$ Using this control and Jensen's inequality, we have
\begin{align*}
\esp{A_1} \leq & t^3\int_0^t \esp{b(X^{i,[n-1]}_s,\mu^{[n-1]}_s)^4}ds\\
\leq& Ct^3\int_0^t \ll(1+w^{[n-1]}_s + \esp{\ll(\int_\r |y| \mu^{[n-1]}_s(dy)\rr)^4}\rr)ds\\
\leq& C t^3\int_0^t \ll(1+w^{[n-1]}_s\rr)ds.
\end{align*}

We can obtain a similar control for the expressions $A_2,A_3$ and $A_4$ using Burkholder-Davis-Gundy's inequality noticing that the stochastic integrals involved are local martingales. We just give the details for $A_2.$

\begin{align*}
\esp{A_2}\leq& \esp{\ll(\int_0^t\sigma(X^{i,[n-1]}_s,\mu^{[n-1]}_s)^2ds\rr)^2}\\
\leq& \esp{t\int_0^t\sigma(X^{i,[n-1]}_s,\mu^{[n-1]}_s)^4ds}\leq C t\int_0^t\ll(1+ w^{[n-1]}_s\rr)ds,
\end{align*} 
where the last inequality can be obtained with the same reasoning as the one used to control $\esp{A_1}.$ With the same reasoning, we have the following controls for $A_3$ and $A_4:$
$$\esp{A_3} + \esp{A_4} \leq C t\int_0^t\ll(1+ w^{[n-1]}_s\rr)ds.$$
Using the previous control in the inequality~\eqref{controlwnt}, we have that for all~$t\geq 0,$
$$w^{[n+1]}_t\leq C(1+t^3)+C(1+t^3)\int_0^t w^{[n]}_sds,$$
whence
$$w^{[n]}_t\leq \sum_{k=1}^n\frac{t^{k-1}}{(k-1)!}C^k(1+t^3)^k\leq C(1+t^3)e^{Ct(1+t^3)}.$$
Consequently
\begin{equation}
\label{controlXNmckeandiffu}
\underset{n\in\n}{\sup}~~\underset{0\leq s\leq t}{\sup}\esp{(X^{i,[n]}_s)^4}<\infty,
\end{equation}
for some constant $C>0.$ Then Fatou's lemma implies the result: for all $t\geq 0,$
\begin{equation}
\label{controlbarXmckeandiffu}
\underset{0\leq s\leq t}{\sup}\esp{(\bar X^i_s)^4}<\infty.
\end{equation}

{\it Step~4.} Finally, we conclude the proof showing that $\bar X^i$ is solution to the limit equation. Roughly speaking, this step consists in letting $n$ tend to infinity in~\eqref{Xin}.

We want to prove that, for all $t\geq 0,$
\begin{equation}
\label{equationlimite}
\bar X^i_t = G_t^i(\bar X^i,\mu),
\end{equation}
where
\begin{align*}
G_t^i(\bar X^i,\mu):=& \int_0^t b(\bar X^i_s,\mu_s)ds + \int_0^t\sigma(\bar X^i_s,\mu_s)d\beta^i_s\\
&+ \int_0^t\int_0^1\int_\r h(F_s^{-1}(p),\bar X^i_s,\mu_s,v)dW(s,p,v)\\
&+\int_0^t\int_0^1 g(F_s^{-1}(p),\bar X^i_s,\mu_s)dW^i(s,p),
\end{align*}
where the functions $h$ and $g$ have been introduced in {\it Step~1}, $\mu_t := \mathcal{L}(\bar X^i_t|\W)$, and $F_t^{-1}$ is the generalized inverse of
$$F_t(x) := \proc{\bar X^i_t\leq x}{\W}.$$
Let us note that $G^i_t$ has to be understood as a notation, we do not use its functional properties.

By construction, we have
\begin{equation}
\label{equationavantlimite}
X^{i,[n+1]}_t = G_t^i(X^{i,[n]},\mu^{[n]}).
\end{equation}

We have proved in {\it Step~2} that $X^{[n+1]}_t$ converges to $\bar X^i_t$ in $L^1.$ In other words, the LHS of~\eqref{equationavantlimite} converges to the LHS of~\eqref{equationlimite} in $L^1$. Now, it is sufficient to prove that the RHS converges in $L^2.$ This will prove that the equation~\eqref{equationlimite} holds true.

With the same computations as the ones used to obtain~\eqref{ununmoinsun} (recalling that this inequality relies on~\eqref{calcul2} and~\eqref{calcul1}), we have
$$\esp{\ll(G^i_t(\bar X^i,\mu) - G^i_t(X^{i,[n]},\mu^{[n]})\rr)^2}\leq C(1+t)\int_0^t \esp{\ll(\bar X^i_s - X^{i,[n]}_s\rr)^2}ds.$$

This proves that $G^i_t(X^{[n],i},\mu^{[n]})$ converges to $G^i_t(\bar X^i,\mu)$ in $L^2$ by dominated convergence: indeed, we know that for all $s\leq t,$ $X^{[n],i}_s$ converges to $\bar X^i_s$ almost surely thanks to {\it Step~2.}, and~\eqref{controlXNmckeandiffu} and~\eqref{controlbarXmckeandiffu} give the uniform integrability.

\subsection{Trajectorial uniqueness}
We continue the proof of Theorem  \ref{existencesolution} by proving the uniqueness of the solution. For that sake, 
let $\hat X^i$ and $\check X^i$ be two strong solutions defined with respect to the same initial condition $X^i_0$ and the same white noises $W$ and $W^i.$ Let
$$u_t:=\esp{(\hat X^i_t-\check X^i_t)^2}.$$

According to the computation of the previous subsection, there exists a constant $C>0$ such that for all $t\geq 0,$
$$u_t\leq C(1+t)\int_0^t u_sds.$$
Then Gr\"onwall's lemma implies that $u_t=0$ for all $t\geq 0,$ implying the uniqueness. 

\section{Proof of Theorems~\ref{convergenceXNi} and~\ref{convergencemuN}}\label{sec:4}

The proof of Theorem~\ref{convergencemuN} follows the same steps as the proof of Theorem~1.7 of \cite{erny_conditional_2020}: in a first time, we prove that the sequence $(\mu^N)_N$ is tight on $\mathcal{P} (D(\r_+,\r)),$ and then we prove that each converging subsequence of $(\mu^N)_N$ converges to the same limit, by proving that the limits of such subsequences are solutions to some martingale problem which is well-posed.

\subsection{Tightness of \texorpdfstring{$(\mu^N)_N$}{(muN)N}}
\label{tightnessmesuremartingale}

The proof that the sequence $(\mu^N)_N$ is tight on $\mathcal{P} (D(\r_+,\r))$, is almost the same as that of Proposition~2.1 in \cite{erny_conditional_2020}.  As the systems $(X^{N,i})_{1\leq i\leq N}$ ($N\in\n^*$) are exchangeable, it is equivalent to the tightness of the sequence $(X^{N,1})_N$ on $D(\r_+,\r)$ (see Proposition~2.2-(ii) of \cite{sznitman_topics_1989}).

The tightness of $(X^{N,1})_N$ is straightforward using Aldous' criterion (see Theorem~4.5 of \cite{jacod_limit_2003}), observing that, under our conditions, $ \sup_N \esp{ \sup_{s \le t } |X^{N, 1 }_s| } < \infty$ (see Lemma~\ref{apriorimckeandiffu}). 

\subsection{Martingale problem}\label{section:mp}
To identify the structure of any possible limit of the sequence $(\mu^N)_N$, we introduce a convenient martingale problem. We have already used such a kind of martingale problem in a similar context in Section~2.2 of \cite{erny_conditional_2020}. Since our limit system is necessarily an infinite exchangeable system, the martingale problem is constructed such that it reflects the correlations between the particles. It is therefore stated in terms of couples of particles.

Consider a probability measure  $Q\in\mathcal{P}(\mathcal{P} (D(\r_+,\r))).$ In what follows the role of $ Q$ will be to be the law of  any possible limit $ \mu $ of $\mu^N.$ Our martingale problem is stated on  the canonical space 
$$\Omega=\mathcal{P} (D(\r_+,\r))\times D(\r_+,\r)^2.$$
We endow $\Omega$  with the product of the associated Borel sigma-fields  and with the probability measure defined for all $A\in \B(\mathcal{P} (D(\r_+,\r))),\; B\in\B( D(\r_+,\r)^2)$ by
\begin{equation}\label{pq}
P_Q(A\times B):=\int_{\mathcal{P}_1(D(\r_+,\r))}\un_A(m) m\otimes m(B)Q(dm).
\end{equation}

We write an atomic event $\omega\in\Omega$ as $\omega=(m,y)$ with $y=(y_t)_{t\geq 0}=(y^1_t,y^2_t)_{t\geq 0}.$ 
We write $\mu$ and $Y=(Y^1,Y^2)$ for the random variables $\mu(\omega)=m$ and $Y(\omega)=y.$ 
The definition \eqref{pq} implies that $Q$ is the distribution of $\mu$, and that conditionally on $\mu,$  $Y^1$ and $Y^2$ are i.i.d. with distribution $\mu.$ More precisely, $\mu\otimes\mu$ is a regular conditional distribution of $(Y^1,Y^2)$ given $\mu.$

For $t\geq 0$ we write $m_t$ for the $t$-th marginal of $m:$
$m_t(C)=m(Y\in D^2(\r_+,\r);\ y_t\in C);\quad C\in \B(\r).$ We denote $\mu_t$ the r.v. $\mu_t(\omega)=m_t$
and consider the filtration $(\G_t)_{t\geq 0}$ given by
 $$\mathcal{G}_t=\sigma(Y_s,s\leq t)\vee\sigma(\mu_s (A);\; A\in\B(\r),\; s\leq t).$$  

For all $g\in C^2_b(\r^2),$
define \begin{align}\label{L}
Lg(y,m,x,v):=&b(y^1,m)\partial_{y^1}g(y)+b(y^2,m)\partial_{y^2}g(y)\\ \nonumber
&+\frac12\sigma(y^1,m)^2\partial^2_{y^1}g(y)+\frac12\sigma(y^2,m)^2\partial^2_{y^2}g(y)\\\nonumber
&+\frac12 f(x,m)\kappa(x,y^1,m)^2\partial^2_{y^1}g(y)+\frac12 f(x,m)\kappa(x,y^2,m)^2\partial^2_{y^2}g(y)\\\nonumber
&+\frac12 f(x,m)\sum_{i,j=1}^2\tilde\Psi(x,y^i,m,v)\tilde\Psi(x,y^j,m,v)\partial^2_{y^iy^j}g(y),
\end{align}
and put for all $t\geq 0$
\begin{equation}\label {M}
M_t^g=M_t^{g}(\mu,Y):=g(Y_t)-g(Y_0)-\int_0^t\int_\r\int_\r Lg(Y_s,\mu_s,x,v)\nu_1(dv)\mu_s(dx)ds.
\end{equation}

\begin{defi}\label{def:martpro}
$Q$ is solution to the martingale problem $(\mathcal{M})$ if
\begin{itemize}
\item $Q-$almost surely, $\mu_0=\nu_0,$
\item for all $g\in C^2_b(\r^2),$ $(M_t^g)_{t\geq 0}$ is a $(P_Q,(\mathcal{G}_t)_t)-$martingale.
\end{itemize}
\end{defi}

Let us state a first result that allows us to partially recover our limit equation~\eqref{barXi} from the martingale problem~$(\mathcal{M})$. It is the equivalent of Lemma~2.4 of \cite{erny_conditional_2020} in the framework of white noises.

\begin{lem}
\label{representationmartingale}
Grant Assumptions~\ref{XNiwellposed},~\ref{barXiwellposed} and~\ref{hyppsi}. Let $Q$ be a solution of $(\mathcal{M})$.Using the notation above, there exist on an extension of $(\Omega, \F, (\G_t)_{t\geq 0}, P_Q)$ two Brownian motions $\beta^1,\beta^2$ and three white noises $W^1,W^2,W$; defined on
$(\r_+\times [0,1],\;  \B(\r_+\times [0,1]),\;  dt\otimes dp)$
for $W^{i},\; i=1,2;$ and on $(\r_+\times [0,1]\times\r,\; \B(\r_+\times [0,1])\times\r),\; dt\otimes dp\otimes \nu_1(dv))$ 
for $W$,  
such that $\beta^1,\beta^2,W^1,W^2,W$ are all independent and such that $(Y_t)$ admits the representation
\begin{align*}
dY^1_t=&b(Y^1_t,\mu_t)dt+\sigma(Y^1_t,\mu_t)d\beta^1_t\\
&+\int_0^1\int_\r\sqrt{f(F_s^{-1}(p),\mu_t)}\tilde\Psi(F_s^{-1}(p),Y^1_t,\mu_t,v)dW(t,p,v)\\
&+\int_0^1\sqrt{f(F_s^{-1}(p),\mu_t)}\kappa(F_s^{-1}(p),Y^1_t,\mu_t)dW^1(t,p),\\
dY^2_t=&b(Y^2_t,\mu_t)dt+\sigma(Y^2_t,\mu_t)d\beta^2_t\\
&+\int_0^1\int_\r\sqrt{f(F_s^{-1}(p),\mu_t)}\tilde\Psi(F_s^{-1}(p),Y^2_t,\mu_t,v)dW(t,p,v)\\
&+\int_\r\sqrt{f(F_s^{-1}(p),\mu_t)}\kappa(F_s^{-1}(p),Y^2_t,\mu_t)dW^2(t,p),
\end{align*}
where $F_s$ is the distribution function related to $\mu_s$, and $F_s^{-1}$ its generalized inverse.
\end{lem}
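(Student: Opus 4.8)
The plan is to read off from the martingale problem $(\mathcal{M})$ the continuous semimartingale structure of the pair $Y=(Y^1,Y^2)$, to compute the predictable brackets of its martingale parts, and then to realize those martingales as stochastic integrals against a family of mutually independent Brownian motions and white noises, using the representation theorem for continuous orthogonal martingale measures of~\cite{el_karoui_martingale_1990}. This is the exact analogue, in the white noise setting, of Lemma~2.4 of~\cite{erny_conditional_2020}, and I would organize the argument along the same three steps.

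First I would extract the semimartingale decomposition of each coordinate. Applying the martingale property of $M^g$ to functions approximating $g(y)=y^i$ — which is legitimate after a standard localization, using the a priori moment estimates to pass from $C^2_b$ to functions of linear growth — shows that
$$M^i_t := Y^i_t - Y^i_0 - \int_0^t b(Y^i_s,\mu_s)\,ds, \qquad i=1,2,$$
are $(\mathcal{G}_t)$-local martingales under $P_Q$. Because the generator $L$ carries no integral (jump) part, the predictable bracket of $Y^i$ is continuous, which forces $Y^1$ and $Y^2$ (a priori only c\`adl\`ag) to have continuous trajectories. Choosing next $g(y)=(y^i)^2$ and $g(y)=y^1y^2$ and comparing $M^g$ with It\^o's formula, I would identify the brackets
\begin{align*}
\langle M^i \rangle_t &= \int_0^t \Big( \sigma(Y^i_s,\mu_s)^2 + \int_\r f(x,\mu_s)\kappa(x,Y^i_s,\mu_s)^2\,\mu_s(dx) \\
&\qquad\qquad {}+ \int_\r\int_\r f(x,\mu_s)\tilde\Psi(x,Y^i_s,\mu_s,v)^2\,\nu_1(dv)\,\mu_s(dx)\Big)\,ds, \\
\langle M^1, M^2 \rangle_t &= \int_0^t \int_\r\int_\r f(x,\mu_s)\,\tilde\Psi(x,Y^1_s,\mu_s,v)\,\tilde\Psi(x,Y^2_s,\mu_s,v)\,\nu_1(dv)\,\mu_s(dx)\,ds .
\end{align*}
The structural observation that drives everything is that the cross bracket carries neither the $\sigma$ nor the $\kappa$ contribution: the only source of correlation between the two particles is the common kernel $\tilde\Psi$.

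Next I would pass to the quantile parametrization. Using $\int_0^1 \varphi(F_s^{-1}(p))\,dp = \int_\r \varphi(x)\,\mu_s(dx)$, together with the $\mathcal{P}\otimes\B([0,1])$-measurability of $(s,p)\mapsto F_s^{-1}(p)$ established in Lemma~\ref{lem:changevar}, I would rewrite the brackets in terms of the predictable kernels
$$H^i(s,p,v) := \sqrt{f(F_s^{-1}(p),\mu_s)}\,\tilde\Psi(F_s^{-1}(p),Y^i_s,\mu_s,v), \qquad K^i(s,p) := \sqrt{f(F_s^{-1}(p),\mu_s)}\,\kappa(F_s^{-1}(p),Y^i_s,\mu_s),$$
so that $\langle M^i\rangle_t = \int_0^t (\sigma(Y^i_s,\mu_s)^2 + \int_0^1 K^i(s,p)^2\,dp + \int_0^1\int_\r H^i(s,p,v)^2\,\nu_1(dv)\,dp)\,ds$ and $\langle M^1,M^2\rangle_t = \int_0^t\int_0^1\int_\r H^1 H^2\,\nu_1(dv)\,dp\,ds$. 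This is precisely the covariance structure produced by the representation claimed in the statement, in which $W$ is shared by both coordinates while $\beta^i$ and $W^i$ are private; matching the two confirms that the announced equation is the only candidate.

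The remaining step, which I expect to be the main obstacle, is the actual construction of the driving noises. I would invoke the representation theorem for continuous martingale measures of~\cite{el_karoui_martingale_1990} to build, on a possibly enlarged probability space, a white noise $W$ of intensity $dt\,dp\,\nu_1(dv)$ reproducing the common $H$-part of $(M^1,M^2)$, together with private white noises $W^1,W^2$ of intensity $dt\,dp$ and Brownian motions $\beta^1,\beta^2$ reproducing the $K$- and $\sigma$-parts; the vanishing of all off-diagonal brackets then forces $\beta^1,\beta^2,W^1,W^2,W$ to be mutually independent. The delicate issue is that the kernels need not be bounded away from zero — $\tilde\Psi$ may vanish on sets of positive measure — so the noises cannot in general be recovered by directly inverting the stochastic integrals; this is exactly why the statement permits an extension of $(\Omega,\F,(\mathcal{G}_t)_t,P_Q)$, on which one adjoins an independent auxiliary white noise to complete the construction on the degeneracy set of the kernel, following the standard El Karoui--M\'el\'eard procedure. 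Strict positivity of $f$ (Assumption~\ref{XNiwellposed}) and the lower bound on $\kappa$ (Assumption~\ref{barXiwellposed}.$i)$) guarantee that the $K$-kernels are nondegenerate, so the private components are genuinely determined; the subtlety lies in the common component carried by $W$, whose correlated vector structure is what makes this representation go beyond the scalar case and requires the most care.
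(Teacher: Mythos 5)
Your proposal follows essentially the same route as the paper's proof: read off the continuous semimartingale characteristics of $(Y^1,Y^2)$ from the martingale problem, then represent the martingale parts using the El Karoui--M\'el\'eard theory, with the quantile transform $F_s^{-1}(p)$ converting martingale measures of intensity $dt\cdot\mu_t(dx)\cdot\nu_1(dv)$ into white-noise integrals. The paper implements this by citing Theorem~II.2.42 of \cite{jacod_limit_2003} to get the characteristics $(B,C)$ (which agree exactly with the brackets you compute by hand with $g(y)=y^i$, $(y^i)^2$, $y^1y^2$), then Theorem~III-10 of \cite{el_karoui_martingale_1990} --- applied after artificially setting $Y^3=Y^4=Y^5=0$ so as to obtain five orthogonal martingale measures with common intensity $dt\cdot\mu_t(dx)\cdot\nu_1(dv)$ --- and finally Theorem~III-6 of \cite{el_karoui_martingale_1990} for the quantile representation of $M^3,M^4,M^5$ in terms of $W^1,W^2,W$, with $\beta^i_t:=M^i([0,t]\times\r\times\r)$, $i=1,2$. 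Your remarks on the need for an extension of the space and on possible degeneracy of the kernels are in the right spirit: this is precisely what the representation theorems of \cite{el_karoui_martingale_1990} handle.

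One step, however, is justified incorrectly: the continuity of $Y$. You argue that since $L$ carries no integral part, the predictable bracket $\langle M^i\rangle$ is continuous, ``which forces'' the paths of $Y^i$ to be continuous. Continuity of the predictable bracket does not imply continuity of the paths: the compensated Poisson process has $\langle M\rangle_t=\lambda t$, which is continuous, yet the process jumps. The correct deduction is at the level of the characteristics triplet: because $Lg$ contains no integro-differential term for any $g\in C^2_b(\r^2)$, the martingale problem forces the third characteristic (the compensator of the jump measure of $Y$) to vanish, and a semimartingale whose jump compensator is zero has almost surely no jumps, since the expected number of jumps equals the expectation of the compensator. This is exactly what Theorem~II.2.42 of \cite{jacod_limit_2003} delivers in one stroke, which is why the paper invokes it rather than arguing through the bracket. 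With that repair, your argument coincides with the paper's proof.
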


\begin{proof}
Theorem~II.2.42 of \cite{jacod_limit_2003} implies that $Y$ is a continuous semimartingale with characteristics $(B,C)$ given by
\begin{align*}
B^i_t=&\int_0^t b(Y^i_s,\mu_s)ds,~~1\leq i\leq 2,\\
C^{i,i}_t=&\int_0^t\sigma(Y^i_s,\mu_s)^2ds+\int_0^t\int_\r f(x,\mu_s)\kappa(x,Y^i_s,\mu_s)^2\mu_s(dx)ds\\
&+\int_0^t\int_\r\int_\r f(x,\mu_s)\tilde\Psi(x,Y^i_s,\mu_s,v)^2\nu_1(dv)\mu_s(dx)ds,~~1\leq i\leq 2,\\
C^{1,2}_t=&\int_0^t\int_\r\int_\r f(x,\mu_s)\tilde\Psi(x,Y^1_s,\mu_s,v)\tilde\Psi(x,Y^2_s,\mu_s,v)\nu_1(dv)\mu_s(dx)ds.
\end{align*}

As we are interested in finding five white noises, we need to have five local martingales. This is why we introduce artificially $Y^i_t:=0$ for $3\leq i\leq 5.$ The rest of the proof is then an immediate consequence of Theorem~III-10 and Theorem~III-6 of \cite{el_karoui_martingale_1990}. More precisely, Theorem~III-10 implies the existence of five orthogonal martingale measures $M^i$ ($1\leq i\leq 5$) on $\r_+\times\r\times\r$ with intensity $dt\cdot \mu_t(dx)\cdot \nu_1(dv)$ such that
\begin{align*}
dY^1_t=&b(Y^1_t,\mu_t)dt+\int_\r\int_\r\sigma(Y^1_t,\mu_t)dM^1(t,x,v)\\
&+\int_\r\int_\r\sqrt{f(x,\mu_t)}\tilde\Psi(x,Y^1_t,\mu_t,v)dM^5(t,x,v)\\
&+\int_\r\int_\r\sqrt{f(x,\mu_t)}\kappa(x,Y^1_t,\mu_t)dM^3(t,p,v),\\
dY^2_t=&b(Y^2_t,\mu_t)dt+\int_\r\int_\r\sigma(Y^2_t,\mu_t)dM^2(t,x,v)\\
&+\int_\r\int_\r\sqrt{f(x,\mu_t)}\tilde\Psi(x,Y^2_t,\mu_t,v)dM^5(t,x,v)\\
&+\int_\r\int_\r\sqrt{f(x,\mu_t)}\kappa(x,Y^2_t,\mu_t)dM^4(t,x,v).
\end{align*}
Then, $\beta^i_t := M^i([0,t]\times\r\times\r)$ ($i=1,2$) are standard one-dimensional Brownian motions, and Theorem~III-6 of \cite{el_karoui_martingale_1990} allows us to write $M^i$ ($3\leq i\leq 5$) as
\begin{align*}
M^5([0,t]\times A\times B)=&\int_0^t\un_A(F^{-1}_s(p))\un_B(v)dW(s,p,v)\\
M^3([0,t]\times A\times B)=&\int_0^t\un_A(F^{-1}_s(p))dW^1(s,p)\\
M^4([0,t]\times A\times B)=&\int_0^t\un_A(F^{-1}_s(p))dW^2(s,p),
\end{align*}
where $W^1,W^2,W$ are white noises with respective intensities $dt\cdot dp,$ $dt\cdot dp$ and $dt\cdot dp\cdot\nu_1(dv)$.
\end{proof}

We now prove a key result for the proof of our main results. Recall that the martingale problem $(\mathcal{M})$ is given by definition \eqref{def:martpro}.
\begin{thm}
\label{convergingsubsequence}
Grant Assumptions~\ref{XNiwellposed},~\ref{barXiwellposed} and~\ref{hyppsi}. Then the law of every limit in distribution  of the sequence $(\mu^N)$ is solution of the martingale problem $(\mathcal{M})$.
\end{thm}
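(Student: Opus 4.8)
The plan is to prove Theorem~\ref{convergingsubsequence} by identifying the limit of the sequence $(\mu^N)_N$ as a solution of the martingale problem $(\mathcal{M})$. The strategy is the classical one for deriving McKean-Vlasov limits via martingale problems, but adapted here to the conditional setting where the driving object is the random empirical measure itself. Let me think about what needs to happen.

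First, I need to recall what $\mu^N$ is and what the candidate martingale $M^g_t$ looks like. The operator $L$ contains:
- drift terms $b(y^i, m)\partial_{y^i}g$
- diffusion terms $\frac{1}{2}\sigma^2 \partial^2_{y^i}g$
- jump-generated diffusion terms involving $f \kappa^2 \partial^2_{y^i}g$ and $f\tilde\Psi\tilde\Psi \partial^2_{y^iy^j}g$

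The second bullet of Definition~\ref{def:martpro} requires that $(M^g_t)$ is a $(P_Q, (\mathcal{G}_t))$-martingale. Here $Q$ is the law of a limit point of $\mu^N$, and under $P_Q$, conditionally on $\mu$, two coordinates $Y^1, Y^2$ are i.i.d. with law $\mu$.

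The connection to the finite system: apply Itô's formula to $g(X^{N,1}_t, X^{N,2}_t)$ for a test function $g \in C^2_b(\mathbb{R}^2)$ acting on a *pair* of particles. The martingale problem is stated for pairs precisely because the jumps are simultaneous and correlate coordinates, so I need to track the off-diagonal term $C^{1,2}$.

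So the plan is:

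**Step 1.** Apply Itô's formula to $g(X^{N,1}_t, X^{N,2}_t)$. This produces a drift part (which should converge to the $\int L g \,\nu_1 \mu_s\, \mu_s \,ds$ term) plus local martingale parts (from the Brownian motions $\beta^i$ and the compensated Poisson integrals).

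**Step 2.** Compute the compensator of the jump part. Because jumps are simultaneous and scaled by $N^{-1/2}$, the compensator of $g(X^{N,1}, X^{N,2})$ involves a Taylor expansion of $g$ around the pre-jump state: the zeroth-order term vanishes (since $g$ is evaluated at the same point), the first-order term vanishes by the centering Assumption~\ref{hyppsi}(i) (the $\int \Psi \,d\nu = 0$ condition), and the second-order term survives — this is exactly how the $\frac{1}{2}f\kappa^2$ and $\frac{1}{2}f\tilde\Psi\tilde\Psi$ terms in $L$ arise. The third-order remainder is controlled by Assumption~\ref{hyppsi}(ii) (third moments of $\Psi$) and vanishes as $N\to\infty$ because of the $N^{-1/2}$ scaling — a term of order $N^{-3/2}\cdot N = N^{-1/2}$.

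**The main obstacle.** The hard part is passing to the limit inside the functional. I need to show that the functional
\[
F(m) := \mathbb{E}_{m\otimes m}\!\left[ g(Y_t) - g(Y_s) - \int_s^t \!\int_{\mathbb{R}}\!\int_{\mathbb{R}} Lg(Y_r,m_r,x,v)\,\nu_1(dv)\,m_r(dx)\,dr \;\Big|\; \mathcal{G}_s \right]
\]
applied to $\mu^N$ converges to its analogue applied to the limit $\mu$. This is delicate for two reasons. Firstly, $F$ is a nonlinear functional of $m$ (the law $m\otimes m$ appears, and $m_r(dx)$ is integrated against coefficients depending on $m$), and convergence $\mu^N \Rightarrow \mu$ in $\mathcal{P}(D)$ is only weak convergence, so I must verify that $F$ is continuous (or at least continuous on a set of full limit-measure) with respect to the topology on $\mathcal{P}(D(\mathbb{R}_+,\mathbb{R}))$. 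The coefficients $b,\sigma,\sqrt f, \kappa, \tilde\Psi$ are Wasserstein-Lipschitz in the measure argument (Assumptions~\ref{XNiwellposed} and Remark~\ref{kappa2lip}), which gives the needed continuity, but I must reconcile weak convergence of $\mu^N$ with Wasserstein-Lipschitz continuity — precisely the technical difficulty flagged in the introduction. The standard remedy is to use a truncation argument together with the uniform moment bound $\sup_N \mathbb{E}[\sup_{s\le t}|X^{N,1}_s|]<\infty$ from Lemma~\ref{apriorimckeandiffu} to upgrade weak convergence to $W_1$-convergence of the time-marginals along the subsequence. Secondly, I must handle the discontinuity of the map $m\mapsto m_t$ at fixed times $t$; this is resolved by choosing $s,t$ outside the (at most countable) set of times where the limit has atoms, and by noting that the finite-dimensional marginals of $\mu^N$ converge at continuity points.

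**Step 3 (conclusion).** Having shown $\mathbb{E}[M^{N,g}_t - M^{N,g}_s \mid \mathcal{G}_s^N]\to 0$ where $M^{N,g}$ is the finite-$N$ analogue built from the pair $(X^{N,1},X^{N,2})$, and having identified the compensator limit with the expression in \eqref{M}, I test against bounded $\mathcal{G}_s$-measurable continuous functionals. Passing to the limit along the convergent subsequence and using the continuity established in Step~2, I conclude that under $P_Q$ the process $M^g_t$ has conditionally vanishing increments, i.e. is a $(P_Q,(\mathcal{G}_t))$-martingale. Together with the initial condition $\mu_0 = \nu_0$ (which follows from $\mu^N_0 \Rightarrow \nu_0$ by the law of large numbers on the i.i.d. initial data), this shows $Q$ solves $(\mathcal{M})$. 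I expect the argument to parallel closely the proof of the corresponding statement (Theorem in Section~2.2) of \cite{erny_conditional_2020}, with the genuinely new work concentrated in Step~2's reconciliation of weak and Wasserstein convergence, since here the coefficients depend on the empirical measure whereas in \cite{erny_conditional_2020} they did not.
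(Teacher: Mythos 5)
Your proposal follows essentially the same route as the paper's proof: It\^o's formula applied to the pair $(X^{N,1},X^{N,2})$, a Taylor expansion in which the first-order term vanishes by the centering Assumption~\ref{hyppsi}~$i)$, the second-order term reproduces the $\kappa^2$ and $\tilde\Psi\tilde\Psi$ parts of $L$, and a third-order remainder of order $N^{-1/2}$ controlled by Assumption~\ref{hyppsi}~$ii)$, together with the technical continuity step (the paper's Lemma~\ref{suitethm43}) that upgrades weak convergence of $\mu^N$ to $W_1$-convergence of the time-marginals via Skorokhod representation and the moment bound of Lemma~\ref{apriorimckeandiffu}. Two minor differences worth noting: the paper takes the pair test function in $C^3_b(\r^2)$ rather than $C^2_b(\r^2)$ precisely so that the Taylor--Lagrange bound on the third-order remainder is legitimate, and instead of your device of avoiding atom times it proves that the limit $\mu$ is a.s.\ supported by continuous trajectories ($C$-tightness via the third-moment control on jump sizes), so that the marginals converge at every time.
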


\begin{proof}
Let $\mu$ be the limit in distribution of some subsequence of $(\mu^N)$ and let $ Q = Q_\mu $ be its law. In the following, we still note this subsequence $(\mu^N).$ Firstly, we clearly have that $\mu^0=\nu^0$. For
 $0\leq s_1\leq ...\leq s_k\leq s;\;   \psi_1,...,\psi_k\in C_b(\mathcal{P}(\r)); \; \phi_1,...,\phi_k\in C_b(\r^2);\; \phi\in {C^3_b(\r^2)}$ define the following functional on $\mathcal{P} (D(\r_+,\r))$ :
\begin{equation*}
F(\mu):=
\psi_1(\mu_{s_1})...\psi_k(\mu_{s_k})\int_{D(\r_+,\r)^2}\mu\otimes\mu(d\gamma)\phi_1(\gamma_{s_1})\ldots\phi_k(\gamma_{s_k})
\ll[M_t^{\phi}(\mu,\gamma)-M_s^{\phi}(\mu,\gamma)\rr],
\end{equation*}
where $(M_t^{\phi}(\mu,\gamma))_t$ is given by~\eqref{M}.
To show that $(M_t^{\phi}(\mu,\gamma))_{t\geq 0}$ is a $(P_{Q_{\mu}}, (\G_t)_{t})$ martingale, we have to show that 
$$\int_{\mathcal{P} (D(\r_+,\r))} F(m)Q_{\mu}(dm)=\esp{F(\mu)}=0,$$
where the expectation $ \esp{ \cdot}$  is taken with respect to $ P_{Q_\mu} = P_Q.$ Note that the first equality is just the transfer formula, and hence the expectation is taken on the probability space where $\mu$ is defined.

{\it Step~1.} We show that  
\begin{equation}\label{eq:good}
\esp{ F( \mu^N) } \to \esp{  F( \mu)  } 
\end{equation} as $N \to \infty.$ This statement is not immediately clear since the functional $F$ is not continuous. The main difficulty comes from the fact that $\mu^N$ converges to $\mu$ in distribution for the topology of the weak convergence, but the terms appearing in the function $F$ require the convergence of $\mu^N_t$ to $\mu_t$ for the topology of the metric $W_1.$

The proof \eqref{eq:good} is actually quite technical, therefore we postpone it to the Appendix in Lemma~\ref{suitethm43}. \\

{\it Step 2.} In this step we show that $\esp{F(\mu)}$ is equal to $0.$ Applying $F$ to $\mu^N$ gives
\begin{multline}\label {FmuN}
F(\mu^N)=\psi_1(\mu^N_{s_1})...\psi_k(\mu^N_{s_k})\frac{1}{N^2}\sum_{i,j=1}^N\phi_1(X^{N,i}_{s_1},X^{N,j}_{s_1})...\phi_k(X^{N,i}_{s_k},X^{N,j}_{s_k})\\
\Big[\phi(X^{N,i}_t,X^{N,j}_t)-\phi(X^{N,i}_s,X^{N,j}_s) \\
-\int_s^tb(X^{N,i}_r,\mu^N_r)\partial_{x^1}\phi(X^{N,i}_r,X^{N,j}_r)dr-\int_s^tb(X^{N,j}_r,\mu^N_r)\partial_{x^2}\phi(X^{N,i}_r,X^{N,j}_r)dr\\
-\frac12\int_s^t\sigma(X^{N,i}_r,\mu^N_r)^2\partial^2_{x^1} \phi (X^{N,i}_r,X^{N,j}_r)dr-\frac12\int_s^t\sigma(X^{N,j}_r,\mu^N_r)^2\partial^2_{x^2}\phi (X^{N,i}_r,X^{N,j}_r)dr\\
-\frac12\int_s^t\frac 1 N\sum_{k=1}^N f(X^{N,k}_r,\mu^N_r)\kappa(X^{N,k}_r,X^{N,i}_r,\mu^N_r)^2\partial^2_{x^1}\phi(X^{N,i}_r,X^{N,j}_r)dr\\
-\frac12\int_s^t\frac 1 N\sum_{k=1}^N f(X^{N,k}_r,\mu^N_r)\kappa(X^{N,k}_r,X^{N,j}_r,\mu^N_r)^2\partial^2_{x^2}\phi(X^{N,i}_r,X^{N,j}_r)dr\\
-\frac12\int_s^t\int_\r\sum_{k=1}^N f(X^{N,k}_r,\mu^N_r)\frac 1 N\sum_{h,l=1}^2\tilde\Psi(X^{N,k}_r,X^{N,i_h}_r,\mu^N_r,v)\tilde\Psi(X^{N,k}_r,X^{N,i_l}_r,\mu^N_r,v) \cdot \\
 \partial^2_{x^hx^l}\phi(X^{N,i}_r,X^{N,j}_r)\nu_1(dv)dr\Big],
\end{multline}
with $i_1 = i$ and $i_2 = j.$
Denote $\tilde\pi^k(dr,dz,du):=\pi(dr,dz,du)-dr\cdot dz\cdot \nu(du)$  the compensated version  of $\pi^k.$
For $(i,j)~\in ~\llbracket 1,\ldots, N \rrbracket^2,\; s<t$ let us define 
\begin{multline}\label{Mij}
M_{s,t}^{N,i,j}:=\int_s^t\sigma(X^{N,i}_r,\mu^N_r)\partial_{x^1}\phi(X^{N,i}_r,X^{N,j}_r)d\beta^i_r
+\int_s^t\sigma(X^{N,j}_r,\mu^N_r)\partial_{x^2}\phi(X^{N,i}_r,X^{N,j}_r)d\beta^j_r;
\end{multline}

\begin{multline}\label{Wij}
W^{N,i,j}_{s,t}:=\sum_{k=1, k \neq i , j}^N\int_{]s,t]\times\r_+\times E}\uno{z\leq f(X^{N,k}_{r-},\, \mu^N_{r-})}\\
\ll[\phi(X^{N,i}_{r-}+\frac{1}{\sqrt{N}}\Psi(X^{N,k}_{r-},X^{N,i}_{r-},\mu^N_{r-},u^k,u^i),\; X^{N,j}_{r-} 
+\frac{1}{\sqrt{N}}\Psi(X^{N,k}_{r-},X^{N,j}_{r-},\mu^N_{r-},u^k,u^j))\rr.\\
\ll.-\phi(X^{N,i}_{r-},X^{N,j}_{r-})\rr]\tilde \pi^k(dr,dz,du);
\end{multline}

\begin{multline}\label{deltaij}
\Delta^{N,i,j}_{s,t}:=\sum_{k=1, k \neq i, j }^N\int_s^t\int_E f(X^{N,k}_r,\mu^N_{r-})\\
\ll[\phi(X^{N,i}_{r-}+\frac{1}{\sqrt{N}}\Psi(X^{N,k}_{r-},X^{N,i}_{r-},\mu^N_{r-},u^k,u^i),\: X^{N,j}_{r-}
 +\frac{1}{\sqrt{N}}\Psi(X^{N,k}_{r-},X^{N,j}_{r-},\mu^N_{r-},u^k,u^j))\rr. \\
\ll. -\phi(X^{N,i}_{r-},X^{N,j}_{r-})\rr]\nu(du)dr
\end{multline}
and

\begin{multline}\label{taylor2}
\Gamma^{N,i,j}_{s,t}=\Delta^{N,i,j}_{s,t}
\\
- \sum_{l=1}^2\sum_{k=1, k \neq i, j }^N\int_s^t\int_Ef(X^{N,k}_r,\mu^N_r)\frac{1}{\sqrt{N}}\Psi(X^{N,k}_{r-},X^{N,i_l}_{r-},\mu^N_{r-},u^k,u^{i_l})\partial_{x^l}\phi(X^{N,i}_r,X^{N,j}_r)\nu(du)dr\\
-\sum_{h,l=1}^2\int_s^t\int_E\frac1N\sum_{k=1, k\neq i, j }^Nf(X^{N,k}_r,\mu^N_r)\Psi(X^{N,k}_r,X^{N,i_h}_r,\mu^N_r,u^k,u^{i_h})\Psi(X^{N,k}_r,X^{N,i_l}_r,\mu^N_r,u^k,u^{i_l})\\
\partial^2_{x^hx^l}\phi(X^{N,i}_r,X^{N,j}_r)\nu(du)dr,
\end{multline}
with again $i_1 = i$ and $i_2 = j.$

Applying Ito's formula, we have
\begin{multline}\label {Ito}
\phi(X^{N,i}_t,X^{N,j}_t)=\phi(X^{N,i}_s,X^{N,j}_s)
+M^{N,i,j}_{s,t}+W^{N,i,j}_{s,t}+\Delta^{N,i,j}_{s,t}\\
+\int_s^t b(X^{N,i}_r,\mu^N_r)\partial_{x^1}\phi(X^{N,i}_r,X^{N,j}_r)dr
+\int_s^t b(X^{N,j}_r,\mu^N_r)\partial_{x^2}\phi(X^{N,i}_r,X^{N,j}_r)dr\\
+\frac12\int_s^t\sigma(X^{N,i}_r,\mu^N_r)^2\partial^2_{x^1}\phi(X^{N,i}_r,X^{N,j}_r)dr
+\frac12\int_s^t\sigma(X^{N,j}_r,\mu^N_r)^2\partial^2_{x^2}\phi(X^{N,i}_r,X^{N,j}_r)dr.
\end{multline}

Note that thanks to Assumption \eqref {hyppsi}$\; i)$, the term in the second line of  \eqref {taylor2} is zero. Again, 
if $i\neq j,$ using the definitions \eqref{psitilde} and \eqref{kappacarre}
\begin{multline*}
\int_E\Psi(X^{N,k}_r,X^{N,i}_r,\mu^N_r,u^k,u^{i})\Psi(X^{N,k}_r,X^{N,j}_r,\mu^N_r,u^k,u^{j})\nu(du)\\
=\int_\r\tilde\Psi(X^{N,k}_r,X^{N,i}_r,\mu^N_r,v)\tilde\Psi(X^{N,r},X^{N,j},\mu^N_r,v)\nu_1(dv),
\end{multline*}
and
\begin{multline*}
\int_E\Psi(X^{N,k}_r,X^{N,i}_r,\mu^N_r,u^k,u^{i})^2\nu(du)=\kappa(X^{N,k}_r,X^{N,i}_r,\mu^N_r)^2+\int_\r\tilde\Psi(X^{N,k}_r,X^{N,i}_r,\mu^N_r,v)^2\nu_1(dv).
\end{multline*}

This implies that the last line of~\eqref{taylor2} is equal to the sum of three last lines of \eqref{FmuN},  up to an error term which is of order $ O ( t/N) $ if we include the terms $ k = i, j $ in \eqref{taylor2}.

As a consequence, plugging \eqref{Ito} in \eqref {FmuN}, using the definitions \eqref{Wij}, \eqref {Mij}, \eqref{taylor2} and the previous remark we obtain
\begin{equation*}
F(\mu^N)=\psi_1(\mu^N_{s_1})...\psi_k(\mu^N_{s_k})\frac{1}{N^2}\sum_{i,j=1}^N\phi_1(X^{N,i}_{s_1},X^{N,j}_{s_1})...\phi_k(X^{N,i}_{s_k},X^{N,j}_{s_k})\ll[M^{N,i,j}_{s,t}+W^{N,i,j}_{s,t}+\Gamma^{N,i,j}_{s,t}\rr] .
\end{equation*}
Using \eqref {Ito} we see that  $(M^{N,i,j}_{s,t}+ W^{N,i,j}_{s,t}),\; \; {t\geq s},$ is a martingale with respect to the filtration $(\F_t^{X^N})_{t\geq 0}$  with  $\F_t^{X^N}:=\sigma (X^{N,i}_u; X^{N,j}_u; \ s\leq u\leq t )$ on the space where $X^N$  is defined.
Hence, using that $\phi_{s_k}$ and $\psi_{s_k}$ are bounded, and that $\mu^N$ is $(\F_t^{X^N})_{t\geq 0}$  adapted,
\begin{multline}
\esp{F(\mu^N)}=\esp{\esp{F(\mu^N)|\F_s^{X^N}}}=\\
\esp{\psi_1(\mu^N_{s_1})...\psi_k(\mu^N_{s_k})\frac{1}{N^2}\sum_{i,j=1}^N\phi_1(X^{N,i}_{s_1},X^{N,j}_{s_1})...\phi_k(X^{N,i}_{s_k},X^{N,j}_{s_k})\esp{\Gamma^{N,i,j}_{s,t}|\F_s^{X,\mu}}}\\
\leq
\frac{C}{N^2}\sum_{i,j=1}^N\esp{|\Gamma^{N,i,j}_{s,t}|},
\end{multline}
implying that 
$$|\esp{F(\mu^N)}|\leq C\esp{|\Gamma^{N,1,2}_{s,t}|+\frac{|\Gamma^{N,1,1}_{s,t}|}{N}}.$$
Taylor-Lagrange's inequality gives for all $i\neq j,$
\begin{multline*}
\esp{|\Gamma^{N,i,j}_{s,t}|}\leq \\
 C\frac{1}{N\sqrt{N}}\sum_{k=1, k \neq i, j }^N\sum_{n=0}^3\int_s^t\int_E
\esp{\Psi(X^{N,k}_r,X^{N,i}_r,\mu^N_r,u^k,u^{i})^n\Psi(X^{N,k}_r,X^{N,j}_r,\mu^N_r,u^k,u^{j})^{3-n}}\nu(du)dr\\
\leq C\frac{1}{N\sqrt{N}}\sum_{k=1, k \neq i, j }^N\int_s^t\int_E\esp{|\Psi(X^{N,k}_r,X^{N,i}_r,\mu^N_r,u^k,u^{i})|^3+|\Psi(X^{N,k}_r,X^{N,j}_r,\mu^N_r,u^k,u^{j})|^3}\nu(du)dr\\
\leq C\frac{1}{\sqrt{N}},
\end{multline*}
and a similar result holds for $\Gamma^{N,1,1}_{s,t}.$ Consequently,
$$|\esp{F(\mu^N)}|\leq C N^{-1/2},$$
implying together with \eqref{eq:good} that 
$$\esp{F(\mu)}=\underset{N}{\lim} \, \esp{F(\mu^N)}=0.$$
\end{proof}

\subsection{Proof of Theorem~\ref{convergencemuN}}

The beginning of the proof is similar to the proof of Theorem~2.6 of \cite{erny_conditional_2020}. It mainly consists in applying Lemma~\ref{representationmartingale} and Theorem~\ref{convergingsubsequence}.

Let $\mu$ be the limit in distribution of some converging subsequence of $(\mu^N)$ (that we still note $(\mu^N)$). Then, by Proposition~(7.20) of Aldous, $\mu$ is the directing measure of an exchangeable system $(\bar Y^i)_{i\geq 1},$ and $(X^{N,i})_{1\leq i\leq N}$ converges in distribution to $(\bar Y^i)_{i\geq 1}.$

According to Theorem~\ref{convergingsubsequence} and 
Lemma~\ref{representationmartingale}  for every $i\neq j,$ there exist, on an extension, Brownian motions $\beta^{i,j,1},\beta^{i,j,2}$ and white noises $W^{i,j,1},W^{i,j,2},W^{i,j}$ with respective intensities $dt\cdot dp,$ $dt\cdot dp$ and $dt\cdot dp\cdot\nu_1(dv)$ all independent such that
\begin{align*}
d\bar Y^{i}_t=&b(\bar Y^{i}_t,\mu_t)dt+\sigma(\bar Y^{i}_t,\mu_t)d\beta^{i,j,1}_t\\
&+\int_0^1\int_\r\sqrt{f(F_s^{-1}(p),\mu_t)}\tilde\Psi(F_s^{-1}(p),\bar Y^{i}_t,\mu_t,v)dW^{i,j}(t,p,v)\\
&+\int_0^1\sqrt{f(F_s^{-1}(p),\mu_t)}\kappa(F_s^{-1}(p),\bar Y^{i}_t,\mu_t)dW^{i,j,1}(t,p),\\
d\bar Y^{j}_t=&b(\bar Y^{j}_t,\mu_t)dt+\sigma(\bar Y^{j}_t,\mu_t)d\beta^{i,j,2}_t\\
&+\int_0^p\int_\r\sqrt{f(F_s^{-1}(p),\mu_t)}\tilde\Psi(F_s^{-1}(p),\bar Y^{j}_t,\mu_t,v)dW^{i,j}(t,p,v)\\
&+\int_\r\sqrt{f(F_s^{-1}(p),\mu_t)}\kappa(F_s^{-1}(p),\bar Y^{j}_t,\mu_t)dW^{i,j,2}(t,p),
\end{align*}
where $F_s$ is the distribution function related to $\mu_s$, and $F_s^{-1}$ its generalized inverse. We can construct this extension in a global way such that it works for all couples $(i, j ) $ simultaneously. 

As the system $(\bar Y^i)_{i\geq 1}$ is exchangeable, we know that $W^{i,j,1}=W^{i},$ $W^{i,j,2}=:W^j,$ $W^{i,j}=:W,$ $\beta^{i,j,1}=:\beta^{i}$ and $\beta^{i,j,2}=:\beta^{j}.$

It remains to identify the structure of $ \mu_t $ appearing above as the one prescribed in \eqref{mut} as conditional law. This is what we are going to do now. To do so, we introduce the following auxiliary system

\begin{align*}
dZ^{N,i}_t=&b(\tilde Z^{N,i}_t,\mu^{Z,N}_t)dt+\sigma(Z^{N,i}_t,\mu^{Z,N}_t)d\beta^{i}_t\\
&+\int_0^1\int_\r\sqrt{f((F^{Z,N}_s)^{-1}(p),\mu^{Z,N}_t)}\tilde\Psi((F^{Z,N}_s)^{-1}(p),Z^{N,i}_t,\mu^{Z,N}_t,v)dW(t,p,v)\\
&+\int_0^1\sqrt{f((F^{Z,N}_s)^{-1}(p),\mu^{Z,N}_t)}\kappa((F^{Z,N}_s)^{-1}(p),Z^{N,i}_t,\mu^{Z,N}_t)dW^{i}(t,p),
\end{align*}
where $\mu^{Z,N}:=N^{-1}\sum_{i=1}^N\delta_{Z^{N,i}},$ $F^{Z,N}$ is the distribution function related to $\mu^{Z,N}.$ In addition, in the rest of the proof, let $\mu^X:=\mathcal{L}(\bar X^1|\W)$ and $\mu^Y$ be the directing measure of $(\bar Y^i)_{i\geq 1}$ that was denoted by $\mu$ so far in this proof.

With similar computations as in Theorem~\ref{existencesolution}, one can prove that, for all $t\geq 0,$
\begin{align*}
\esp{(Z^{N,i}_t-\bar X^i_t)^2}\leq& C(1+t)\int_0^t\esp{(Z^{N,i}_s-\bar X^i_s)^2}ds +C(1+t)\int_0^t\esp{W_1(\mu^X_s,\mu^{Z,N}_s)^2}ds.
\end{align*}

Besides, introducing $\mu^{X,N}:=N^{-1}\sum_{i=1}^N\delta_{\bar X^i}$, we have
\begin{align*}
\esp{W_1(\mu^X_s,\mu^{Z,N}_s)^2}\leq& 2\esp{W_1(\mu^X_s,\mu^{X,N}_s)^2}+2\esp{W_1(\mu^{X,N}_s,\mu^{Z,N}_s)^2}\\
\leq& 2\esp{W_1(\mu^X_s,\mu^{X,N}_s)^2} + 2\esp{|Z^{N,i}_s-\bar X^i_s|}^2\\
\leq& 2\esp{W_1(\mu^X_s,\mu^{X,N}_s)^2} + 2\esp{(Z^{N,i}_s-\bar X^i_s)^2}.
\end{align*}

As a consequence, we obtain the following inequality, for all $t\geq 0,$
$$\esp{(Z^{N,i}_t-\bar X^i_t)^2}\leq C(1+t)\int_0^t\esp{(Z^{N,i}_s-\bar X^i_s)^2}ds+C_t\int_0^t\esp{W_1(\mu^X_s,\mu^{X,N}_s)^2}ds,$$
with $C_t$ a constant depending on~$t$. Gr\"onwall's lemma now implies that, for all $t\geq 0,$
\begin{equation}\label{controlZX}
\esp{(Z^{N,i}_t-\bar X^i_t)^2}\leq C_t\int_0^t\esp{W_1(\mu^X_s,\mu^{X,N}_s)^2}ds.
\end{equation}
Now, let us prove that the expression above vanishes by dominated convergence. Note that $\mu^{X,N}_s$ converges weakly to $\mu^X_s$ a.s. by Glivenko-Cantelli's theorem (applying the theorem conditionally on $\W$) recalling that the variables $\bar X^j_s$ are conditionally i.i.d. given $\W$. And $\mu^{X,N}_s(|x|)$ converges to $\mu^X_s(|x|)$ a.s. as a consequence of the strong law of large numbers applied conditionally on $\W$ (once again because of the conditional independence property). Hence the characterization~$(i)$ of Definition~6.8 and Theorem~6.9 of \cite{villani_optimal_2008} implies that $W_1(\mu^{X,N}_s,\mu^X_s)$ vanishes a.s. as $N$ goes to infinity for every~$s\geq 0.$

We also have, by Jensen's inequality and the definition of $W^1$, for all $t\geq 0,$
\begin{multline*}
\esp{W_1(\mu^X_s,\mu^{X,N}_s)^4}\leq C\esp{\mu^X_s(x^4)} + C\esp{\mu^{X,N}_s(x^4)} = C\esp{(\bar X^1_s)^4} + \esp{\frac1N\sum_{j=1}^N (\bar X^j_s)^4}\\
\leq C\esp{(\bar X^1_s)^4}.
\end{multline*}
Consequently
$$\underset{0\leq s\leq t}{\sup}\esp{W_1(\mu^X_s,\mu^{X,N}_s)^4} \leq C \underset{0\leq s\leq t}{\sup}\esp{(\bar X^1_s)^4}<\infty,$$
recalling~\eqref{controlbarXmckeandiffu}.

So we have just proven that the sequence of variables $W_1(\mu^X_s,\mu^{X,N}_s)^2$ vanishes a.s. and for every $s\geq 0,$ and that this sequence is uniformly integrable on $\Omega\times[0,t]$ for any $t\geq 0.$ This implies, by dominated convergence, that
$$\int_0^t\esp{W_1(\mu^X_s,\mu^{X,N}_s)^2}ds\underset{N\rightarrow\infty}{\longrightarrow}0.$$
Recalling~\eqref{controlZX}, we have that
\begin{equation}\label{controlZX2}
\esp{(Z^{N,i}_t-\bar X^i_t)^2}\underset{N\rightarrow\infty}{\longrightarrow}0.
\end{equation}
With the same reasoning, we can prove that for all $t\geq 0,$
\begin{equation}\label{controlZY}
\esp{(Z^{N,i}_t-\bar Y^i_t)^2}\underset{N\rightarrow\infty}{\longrightarrow}0.
\end{equation}
Finally, using~\eqref{controlZX2} and~\eqref{controlZY}, we have that, for all $t\geq 0,$
\begin{equation*}
\esp{(\bar Y^i_t-\bar X^i_t)^2}=0.
\end{equation*}
This proves that the systems $(\bar X^i)_{i\geq 1}$ and $(\bar Y^i)_{i\geq 1}$ are equal. Hence, recalling that $\mu=:\mu^Y$ is a limit of $\mu^N$ and also the directing measure of $(Y^i)_{i\geq 1},$ it is also the directing measure of $(\bar X^i)_{i\geq 1},$ which is $\mu^X:=\mathcal{L}(\bar X^1|\W).$

\subsection{Proof of Theorem~\ref{convergenceXNi}}

The proof of Theorem~\ref{convergenceXNi} is now a direct consequence of Theorem~\ref{convergencemuN} and Proposition~(7.20) of \cite{aldous_exchangeability_1983}.

\section{Model of interacting populations}\label{sec:multipop}

The aim of this section is to generalize the previous model, considering $n$ populations instead of one. Within each population, the particles interact as in the previous model, and, in addition, there are interactions at the level of the populations. The chaoticity properties of this kind of model have been studied by \cite{graham_chaoticity_2008}. Two examples of such systems are given in Figure~\ref{graphmultipop}.

\begin{figure}[h]
\begin{tikzpicture}
\node[shape=circle,draw=black] (un) at (0,0) {$1$};
\node[shape=circle,draw=black] (deux) at (1,-1) {$2$};
\node[shape=circle,draw=black] (trois) at (2,-2) {$3$};
\node[shape=circle,draw=black] (quatre) at (1,-3) {$4$};
\node[shape=circle,draw=black] (cinq) at (0,-4) {$5$};
\node[shape=circle,draw=black] (six) at (-1,-3) {$6$};
\node[shape=circle,draw=black] (sept) at (-2,-2) {$7$};
\node[shape=circle,draw=black] (huit) at (-1,-1) {$8$};

\path[->] (un) edge (deux);
\path[->] (deux) edge (trois);
\path[->] (trois) edge (quatre);
\path[->] (quatre) edge (cinq);
\path[->] (cinq) edge (six);
\path[->] (six) edge (sept);
\path[->] (sept) edge (huit);
\path[->] (huit) edge (un);

\node[shape=circle,draw=black] (dun) at (7,0) {$1$};
\node[shape=circle,draw=black] (ddeux) at (5,-2) {$2$};
\node[shape=circle,draw=black] (dtrois) at (9,-2) {$3$};
\node[shape=circle,draw=black] (dquatre) at (4,-4) {$4$};
\node[shape=circle,draw=black] (dcinq) at (6,-4) {$5$};
\node[shape=circle,draw=black] (dsix) at (8,-4) {$6$};
\node[shape=circle,draw=black] (dsept) at (10,-4) {$7$};

\path[<-] (dun) edge (ddeux);
\path[<-] (dun) edge (dtrois);
\path[<-] (ddeux) edge (dquatre);
\path[<-] (ddeux) edge (dcinq);
\path[<-] (dtrois) edge (dsix);
\path[<-] (dtrois) edge (dsept);
\end{tikzpicture}
\caption{Two examples of interacting populations.}
\label{graphmultipop}
\end{figure}
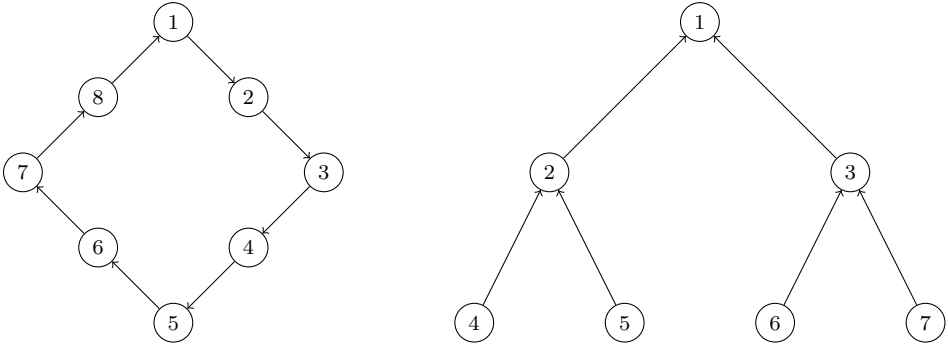

If we consider a number of $N$ particles, we note, for each $1\leq k\leq n,$ $N_k$ the number of particles of the $k-$th population. In particular $N=N_1+...+N_n.$ We assume that, for all $1\leq k\leq n,$ $N_k/N$ converges to some positive number, such that each population survives in the limit system.

For all $1\leq k\leq n,$ let $I(k)\subseteq\llbracket 1,n\rrbracket$ be the set of populations that are ``inputs" of the population~$k,$ that is, such that particles within these populations have a direct influence on those in population $k.$ The dynamic of the $N-$particle system $(X^{N,k,i})_{\substack{1\leq k\leq n\\1\leq i\leq N_k}}$ is governed by the following SDEs.

\begin{multline*}
dX^{N,k,i} = b^k\ll(X^{N,k,i}_t,\mu^{N,k}_t\rr)dt + \sigma^k\ll(X^{N,k,i}_t,\mu^{N,k}_t\rr)d\beta^{k,i}_t\\
+\sum_{l\in I(k)}\frac{1}{\sqrt{N_l}}\sum_{\substack{j=1\\(l,j)\neq (k,i)}}^{N_l}\int_{\r_+\times E^n}\Psi^{lk}(X^{N,l,j}_{t-},X^{N,k,i}_{t-},\mu^{N,l}_{t-},\mu^{N,k}_{t-},u^{l,j},u^{k,i})\uno{z\leq f^l(X^{N,l,j}_{t-},\mu^{N,l}_{t-})}d\pi^{l,j}(t,z,u).
\end{multline*}
In the above equation, 
$$ \mu^{N, k }_t=N_k^{-1}\sum_{j=1}^{N_k}\delta_{X^{N,k, j}_t},$$
$\pi^{l,j}$ ($1\leq l\leq n,j\geq 1$) are independent Poisson measures of intensity $dtdz\nu(du),$ where $\nu$ is a probability measure on $(\r^{\n^*})^n$ which is of the form 
$$\nu = (\nu^{1,1})^{\otimes \n^*}\otimes (\nu^{2,1})^{\otimes\n^*}\otimes...\otimes(\nu^{n,1})^{\otimes\n^*}.$$
The associated  limit system $(\bar X^{k,i})_{\substack{1\leq k\leq n\\i\geq 1}}$ is given by
\begin{align*}
d\bar X^{k,i} =& b^k\ll(\bar X^{k,i}_t,\bar \mu^{k}_t\rr)dt + \sigma^k\ll(\bar X^{k,i}_t,\bar\mu^{k}_t\rr)d\beta^{k,i}_t\\
&+\sum_{l\in I(k)}\int_\r\int_\r \sqrt{f^l(x,\bar\mu^l_t)}\tilde\Psi^{lk}(x,\bar X^{k,i}_t,\mu^l_t,\mu^k_t,v)dM^l(t,x,v)\\
&+\sum_{l\in I(k)}\int_\r \sqrt{f^l(x,\bar\mu^l_t)}\kappa^{lk}(x,\bar X^{k,i}_t,\mu^l_t,\mu^k_t)dM^{l,k,i}(t,x),
\end{align*}
with
$$\tilde\Psi^{lk}(x,y,m_1,m_2,v):= \int_\r \Psi^{lk}(x,y,m_1,m_2,v,w)d\nu^{k,1}(w),$$
\begin{multline*}
\kappa^{lk}(x,y,m_1,m_2)^2:=\int_{E^n} \Psi^{lk}(x,y,m_1,m_2,u^{l,1},u^{k,2})^2d\nu(u) - \int_\r \tilde\Psi^{lk}(x,y,m,v)^2d\nu^{l,1}(v)\\
= \int_{E^n} \Psi^{lk}(x,y,m_1,m_2,u^{l,1},u^{k,2})^2d\nu(u) - \int_{E^n} \Psi^{lk}(x,y,m,u^{l,1},u^{k,2})\Psi^{lk}(x,y,m,u^{l,1},u^{k,3})d\nu(u),
\end{multline*}
and
$$M^{l,k,i}_t(A)=\int_0^t\un_A((F^l_s)^{-1}(p))dW^{l,k,i}(s,p)\textrm{ and }M^l_t(A\times B) = \int_0^t\un_A((F^l_s)^{-1}(p))\un_B(v)dW^l(s,p,v).$$
In the above formulas, $\mu^k_t := \mathcal{L}\left(\bar X_t^{k,1}|\sigma\ll(\bigcup_{l\in I(k)}\mathcal{W}^l\rr)\right)$ and
 $(F^l_s)^{-1}$ is the generalized inverse of the function $F^l_s(x) := P(\bar X^{l,1}_s\leq x).$ Finally, $W^{l,k,i}$ and $W^l$ ($1\leq l\leq n,k\in I(l), i\geq 1$) are independent white noises of respective intensities $dsdp$ and $dsdp\nu^{l,1}(dv),$  and 
 $$\W^l_t:=\sigma\{\, W^l(]u,v]\times A\times B);  u<v\leq t;\  A\in \B([0,1]), B\in \B(\r) \, \};$$
$$\W^l:=\sigma\{W^l_t;\  t\geq 0\}.$$

Both previous systems are ``multi-exchangeable" in the sense that each population is ``internally exchangeable" as in Corollary~$(3.9)$ of \cite{aldous_exchangeability_1983}. With the same reasoning as in the proof of Theorem~\ref{existencesolution}, we can prove the existence of unique strong solutions $(X^{N,k,i})_{\substack{1\leq k\leq n\\1\leq i\leq N_k}}$ and $(\bar X^{k,i})_{\substack{1\leq k\leq n\\i\geq 1}}$
as well as the convergence of the $N-$particle system to the limit system:

\begin{thm}
\label{multipopgdsaut}
The following convergence in distribution in $\PP (D(\r_+,\r))^n$ holds true:
$$\ll(\mu^{N,1},\mu^{N,2},...,\mu^{N,n}\rr)\longrightarrow\ll(\bar\mu^{1},\bar\mu^{2},...,\bar\mu^{n}\rr),$$
as $N \to \infty .$ 
\end{thm}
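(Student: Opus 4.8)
The plan is to follow the three-step scheme used for Theorem~\ref{convergencemuN}, adapting each ingredient to the multi-exchangeable setting: first establish tightness of the vector of empirical measures, then show that every subsequential limit solves a suitable multi-population martingale problem, and finally prove that this martingale problem has a unique solution whose directing measures are precisely the conditional laws $\bar\mu^k$. Since within each population the particles are exchangeable (Corollary~(3.9) of \cite{aldous_exchangeability_1983}) and $N_k/N$ converges to a positive limit, it suffices to reduce tightness population by population and to track finitely many tagged particles for the identification step.

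First I would prove tightness. By internal exchangeability, the tightness of each $(\mu^{N,k})_N$ on $\PP(D(\r_+,\r))$ reduces, via Proposition~2.2-(ii) of \cite{sznitman_topics_1989}, to that of a single tagged coordinate $(X^{N,k,1})_N$ on $D(\r_+,\r)$. This follows from Aldous' criterion together with an a priori bound $\sup_N\esp{\sup_{s\le t}|X^{N,k,1}_s|}<\infty$, obtained as in Lemma~\ref{apriorimckeandiffu} using that $b^k,\sigma^k$ are bounded, $f^l$ is bounded, and $\Psi^{lk}$ is centred and square-integrable. Joint tightness of $(\mu^{N,1},\ldots,\mu^{N,n})$ on $\PP(D(\r_+,\r))^n$ then follows componentwise.

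Next, to identify the limit points, I would set up a martingale problem analogous to $(\mathcal{M})$ but stated in terms of a pair of tagged particles $(X^{N,k,i},X^{N,k',i'})$ with $k$ and $k'$ allowed to differ, so as to capture both the intra-population correlations produced by the shared white noise $W^l$ and the cross-population correlations arising when $I(k)\cap I(k')\neq\emptyset$. The associated generator is the multi-population version of $L$ in \eqref{L}: its diagonal second-order coefficient for population $k$ carries $\sum_{l\in I(k)}\int_\r f^l(x,\mu^l)\,\kappa^{lk}(x,y^1,\mu^l,\mu^k)^2\mu^l(dx)$ plus the corresponding $\tilde\Psi^{lk}$ term, and the cross coefficient $\partial^2_{y^1y^2}$ carries $\sum_{l\in I(k)\cap I(k')}\int_\r\int_\r f^l(x,\mu^l)\tilde\Psi^{lk}(x,y^1,\mu^l,\mu^k,v)\tilde\Psi^{lk'}(x,y^2,\mu^l,\mu^{k'},v)\nu^{l,1}(dv)\mu^l(dx)$. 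Applying Itô's formula to $\phi(X^{N,k,i}_t,X^{N,k',i'}_t)$ and compensating the Poisson integrals exactly as in the proof of Theorem~\ref{convergingsubsequence}, the centering Assumption~\ref{hyppsi}.$i)$ removes the first-order term, the covariance identities \eqref{psitilde}--\eqref{kappacarre} produce the claimed $\tilde\Psi\tilde\Psi$ and $\kappa^2$ structure, and a third-moment bound (the analogue of Assumption~\ref{hyppsi}.$ii)$) controls the Taylor remainder by $O(N^{-1/2})$. Combined with the convergence $\esp{F(\mu^N)}\to\esp{F(\mu)}$, which again demands the technical reconciliation of weak convergence with the $W_1$-continuity of the coefficients as in Lemma~\ref{suitethm43}, this shows that every limit point solves the martingale problem.

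Finally, uniqueness of the limit is obtained by the coupling argument of the proof of Theorem~\ref{convergencemuN}. A representation lemma (the multi-population analogue of Lemma~\ref{representationmartingale}, via Theorems~III-6 and~III-10 of \cite{el_karoui_martingale_1990}) rewrites any solution of the martingale problem as a system driven by white noises $W^l$ and $W^{l,k,i}$, which by multi-exchangeability can be taken common across tagged particles sharing the same input population. Introducing the auxiliary coupled system $Z^{N,k,i}$ built on the true solution's white noises and estimating $\esp{(Z^{N,k,i}_t-\bar X^{k,i}_t)^2}$ and $\esp{(Z^{N,k,i}_t-\bar Y^{k,i}_t)^2}$ by Gr\"onwall, together with the a.s. $W_1$-convergence of the conditional empirical measures (Glivenko--Cantelli and the strong law of large numbers applied conditionally on $\sigma(\cup_{l\in I(k)}\W^l)$, as in \eqref{controlZX2}), forces $\bar Y^{k,i}=\bar X^{k,i}$ and identifies the directing measure of population $k$ with $\mathcal{L}(\bar X^{k,1}\mid\sigma(\cup_{l\in I(k)}\W^l))$; Proposition~(7.20) of \cite{aldous_exchangeability_1983} then upgrades this to the stated convergence in $\PP(D(\r_+,\r))^n$. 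I expect the main obstacle to be the bookkeeping of the conditioning $\sigma$-fields: each $\bar\mu^k$ must be conditioned on the union of the white noises of its input populations, and one must check that the cross-population covariance appearing in the generator is consistent with this conditional-independence structure, so that the representation lemma and the Gr\"onwall coupling close simultaneously for all pairs $(k,k')$.
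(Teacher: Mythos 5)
Your three-step scheme (componentwise tightness via internal exchangeability and Aldous' criterion, identification through a martingale problem whose generator carries the $\kappa^{lk}$ and $\tilde\Psi^{lk_1}\tilde\Psi^{lk_2}$ coefficients, and uniqueness via a multi-population representation lemma followed by the Gr\"onwall coupling with an auxiliary system $Z^{N,k,i}$ and the conditional Glivenko--Cantelli argument) is exactly the paper's scheme, and your conditioning $\sigma$-fields $\sigma(\cup_{l\in I(k)}\W^l)$ and generator coefficients match the paper's. The one genuine gap is your formulation of the martingale problem in terms of a \emph{single pair} of tagged particles $(X^{N,k,i},X^{N,k',i'})$ with $k,k'$ allowed to differ. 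The theorem asserts convergence of the whole vector $(\mu^{N,1},\ldots,\mu^{N,n})$ in $\PP(D(\r_+,\r))^n$, so any limit point must be identified through its \emph{joint} law on $\PP(D(\r_+,\r))^n$. A family of pairwise martingale problems can at best pin down the bivariate marginals $(\mu^k,\mu^{k'})$, and for $n\geq 3$ the joint law of a random vector is not determined by its bivariate marginals (compare pairwise versus joint independence). Concretely, your last step breaks at the representation stage: applying Theorems~III-6 and~III-10 of \cite{el_karoui_martingale_1990} pair by pair produces, for each pair $(k,k')$, white noises living on a pair-dependent extension, and nothing forces these families to be consistent across pairs. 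Without one global family $(W^l)_{1\leq l\leq n}$, $(W^{l,k,i})$ on a single extension, you cannot construct one auxiliary system $Z^{N,k,i}$ and close the Gr\"onwall coupling simultaneously for all populations, which is precisely what identifies the joint law. Your closing remark anticipates a difficulty here, but the fix is not bookkeeping of $\sigma$-fields: it is a change of the state space of the martingale problem.

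The paper resolves this by posing the martingale problem on $\Omega' = \PP(D(\r_+,\r))^n\times \ll(D(\r_+,\r)^2\rr)^n$, i.e. with \emph{two tagged particles from every population simultaneously}, test functions $g\in C_b^2((\r^2)^n)$, and the law $P'$ built from $Q\in\mathcal{P}(\PP_1(D(\r_+,\r))^n)$ via the product $m^1\otimes m^1\otimes\cdots\otimes m^n\otimes m^n$; the cross terms indexed by $l\in I(k_1)\cap I(k_2)$ then encode all inter-population correlations inside a single problem whose unknown $Q$ is the joint law of the vector. With this formulation, everything else you describe goes through as in the single-population case: the It\^o computation with the centering assumption killing the first-order jump term, the $O(N^{-1/2})$ Taylor-remainder bound from the third-moment hypothesis, the Lemma~\ref{suitethm43}-type reconciliation of weak convergence with $W_1$-continuity of the coefficients, and the coupling that identifies the directing measure of population $k$ with $\mathcal{L}(\bar X^{k,1}\mid\sigma(\cup_{l\in I(k)}\W^l))$ --- the representation lemma now delivering one consistent family of driving noises (the multi-population analogue of the paper's remark that the extension can be constructed in a global way for all couples simultaneously). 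A minor additional point: the final upgrade from identification to convergence of the vector uses multi-exchangeability, i.e. Corollary~(3.9) of \cite{aldous_exchangeability_1983}, alongside Proposition~(7.20).
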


Before giving a sketch of the proof of Theorem~\ref{multipopgdsaut}, we quickly state the following result.

\begin{prop}
Let $r\leq n$ and $1\leq k_1<...<k_r\leq n.$ If the sets $I(k_i)$ ($1\leq i\leq r$) are disjoint, then the random variables $\mu^{k_i}$ ($1\leq i\leq r$) are independent.
\end{prop}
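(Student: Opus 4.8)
The plan is to exploit the fact that, by its very definition, each directing measure $\mu^{k_i}$ is a conditional law given the sigma-field $\mathcal{G}_i := \sigma\ll(\bigcup_{l\in I(k_i)}\W^l\rr)$, and is therefore $\mathcal{G}_i$-measurable. Once this measurability is granted, the independence of the $\mu^{k_i}$ reduces to the independence of the conditioning sigma-fields $\mathcal{G}_i$, which in turn is inherited from the independence of the driving white noises $W^l$ together with the disjointness of the index sets $I(k_i)$. There is no analytic estimate involved; the argument is purely measure-theoretic.

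First I would record the measurability claim. Since $D(\r_+,\r)$ is Polish, a regular version of the conditional distribution $\mathcal{L}\ll(\bar X^{k_i,1}\mid \mathcal{G}_i\rr)$ can be chosen as a $\PP(D(\r_+,\r))$-valued random variable that is measurable with respect to $\mathcal{G}_i$. Thus, exactly as in the single-population case where $\mu=\mathcal{L}(\bar X^1\mid \W)$ is $\W$-measurable, each $\mu^{k_i}$ is a measurable function of $\mathcal{G}_i$.

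Next I would establish that $\mathcal{G}_1,\dots,\mathcal{G}_r$ are independent. By construction the white noises $W^l$ ($1\le l\le n$) are independent, so the family $(\W^l)_{1\le l\le n}$ is an independent family of sigma-fields. Since the sets $I(k_1),\dots,I(k_r)$ are pairwise disjoint, each $\mathcal{G}_i=\sigma\ll(\bigcup_{l\in I(k_i)}\W^l\rr)$ is generated by a sub-family of $(\W^l)_l$ disjoint from those generating $\mathcal{G}_j$ for $j\neq i$, and the standard grouping lemma for independent sigma-fields gives that $\mathcal{G}_1,\dots,\mathcal{G}_r$ are independent. The one point requiring a little care is that each $\W^l$ is itself generated by infinitely many white-noise increments, so I would verify the grouping property first on the generating $\pi$-systems of finite-dimensional cylinder events of the form $\{W^l(]u,v]\times A\times B)\in\cdot\}$ and then extend it by a Dynkin (monotone-class) argument.

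Finally, combining the two previous points: $\mu^{k_i}$ is $\mathcal{G}_i$-measurable and the $\mathcal{G}_i$ are independent, hence $\mu^{k_1},\dots,\mu^{k_r}$ are independent, which is the assertion. The only genuinely delicate step is the grouping lemma, and more precisely the reduction of independence of the $\mathcal{G}_i$ to independence on the finite-dimensional generating $\pi$-systems of the white noises; everything else is immediate from the definitions and from the independence of the $W^l$.
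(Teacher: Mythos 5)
Your proof is correct and follows essentially the same route as the paper: the paper likewise reduces the claim to the $\sigma\ll(\bigcup_{l\in I(k_i)}\W^l\rr)$-measurability of each $\mu^{k_i}$ (obtained there via the conditional i.i.d.\ property and Lemma~(2.12) of \cite{aldous_exchangeability_1983}, which identifies the directing measure with the conditional law, while you invoke the definition of $\mu^{k_i}$ as a regular conditional distribution), and then lets the independence of the white noises together with the disjointness of the sets $I(k_i)$ do the rest. Your explicit grouping-lemma and $\pi$-system step is exactly the part the paper leaves implicit, so nothing is missing.
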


\begin{proof}
For any $1\leq k\leq n,$ the system $(\bar X^{k,i})_{i\geq 1}$ is conditionally i.i.d. given $\sigma\ll(\bigcup_{l\in I(k)}\mathcal{W}^l\rr).$ So, by Lemma~$(2.12)$ of \cite{aldous_exchangeability_1983}, $\mu^k$ is $\sigma\ll(\bigcup_{l\in I(k)}\mathcal{W}^l\rr)-$measurable.
\end{proof}

\begin{rem}
In the two examples of Figure~\ref{graphmultipop}, all the variables $\mu^k$ ($1\leq k\leq n$) are independent.
\end{rem}

Coming back to Theorem~\ref{multipopgdsaut}, its proof  is similar to the proof of Theorem~\ref{convergencemuN}. The main argument relies on a generalization of the martingale problem discussed in Section \ref{section:mp}. Let us formulate it. Consider
$${\Omega' := \PP (D(\r_+,\r))^n\times \ll(D(\r_+,\r)^{2}\rr)^n,}$$
and write any atomic event $\omega'\in\Omega'$ as
$$\omega' = \ll(\mu^1,\mu^2,...,\mu^n,Y^{1,1},Y^{1,2},Y^{2,1},Y^{2,2},...,Y^{n,1},Y^{n,2}\rr)= ( \mu, Y ) ,$$
$ \mu = ( \mu^1, \ldots, \mu^n ) , Y = ( Y^1, \ldots, Y^n ).$

For $Q\in\mathcal{P} (\PP_1(D(\r_+,\r))^n),$ consider the law $P'$ on $\Omega'$ defined by
$$P'(A\times B_1\times ...\times B_n) = \int_{ \PP (D(\r_+,\r))^n} \un_A(m)m^1\otimes m^1(B_1)...m^n\otimes m^n(B_n)Q(dm),$$
with $A$ a Borel set of $\PP (D(\r_+,\r))^n$ and $B_1,...,B_n$ Borel sets of $D(\r_+,\r)^2.$

Then, we say that $Q$ is solution to our martingale problem if, for all $g\in C^2_b((\r^2)^n),$
$$g(Y_t) - g(Y_0) - \int_0^t\int_{\r^n}\int_{{\r^n}} Lg(Y_s,\mu_s,x,v)\mu_s^1\otimes...\otimes \mu_s^n(dx)\nu^{1,1}\otimes...\otimes\nu^{n,1}(dv)$$
is a martingale, where
\begin{multline*}
Lg(y,m,x,u) = \sum_{k=1}^n\sum_{i=1}^2 b^k(y^{k,i},m^k)\partial_{y^{k,i}}g(y) + \frac12 \sum_{k=1}^n\sum_{i=1}^2 \sigma^k(y^{k,i},m^k)^2\partial^2_{y^{k,i}}g(y)\\
+\frac12 \sum_{k=1}^n\sum_{l\in I(k)}\sum_{i=1}^2 f^l(x^{l},m^l)\kappa^{lk}(x^{l},y^{k,i},m^l,m^k)^2 \partial^2_{y^{k,i}}g(y)\\
+\frac12 \sum_{k_1,k_2=1}^n\sum_{i_1,i_2=1}^2\sum_{l\in I(k_1)\cap I(k_2)} f^l(x^{l},m^l) \tilde\Psi^{lk_1}(x^{l},y^{k_1,i_1},m^{l},m^{k_1},u^{l} ) \cdot \\
\tilde\Psi^{lk_2}(x^{l},y^{k_2,i_2},m^{l},m^{k_2},u^{l})\partial^2_{y^{k_1,i_1}y^{k_2,i_2}}g(y).
\end{multline*}

\begin{proof}[Sketch of proof of Theorem~\ref{multipopgdsaut}]
To prove the convergence in distribution of $(\mu^{N,1},...,\mu^{N,n})_N,$ we begin by proving its tightness. Following the same reasoning as in Section~\ref{tightnessmesuremartingale}, we can prove that, for each $1\leq k\leq n,$ the sequence $(\mu^{N,k})_N$ is tight on $\mathcal{P} (D(\r_+,\r)).$ Hence, the sequence $(\mu^{N,1},...,\mu^{N,n})_N$ on $\mathcal{P} (D(\r_+,\r))^n.$

Then  a generalization of Lemma~\ref{representationmartingale} allows to prove that the distribution of $(\bar\mu^1,...,\bar\mu^n)$ is the unique solution of the martingale problem defined above.

Finally, we can conclude the proof showing that the law of any limit of a converging subsequence of $(\mu^{N,1},...,\mu^{N,n})_N$ is solution to the martingale problem using similar computations as the one  in the proof of Theorem~\ref{convergingsubsequence}.
\end{proof}

\section{Appendix}

\subsection{A priori estimates}

\begin{lem}
\label{apriorimckeandiffu}
Grant Assumptions~\ref{XNiwellposed},~\ref{barXiwellposed} and~\ref{hyppsi}. For all $T>0,$
$$\underset{n\in\n^*}{\sup}\esp{\underset{t\leq T}{\sup} \ll|X^{N,1}_t\rr|^2}<\infty.$$
\end{lem}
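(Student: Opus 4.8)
The plan is to fix a single coordinate---by exchangeability it suffices to treat $X^{N,1}$---and to split its trajectory into initial value, drift, diffusion and jump parts, estimating each contribution uniformly in $N$. Starting from
$$X^{N,1}_t = X^{N,1}_0 + \int_0^t b(X^{N,1}_s,\mu^N_s)ds + \int_0^t \sigma(X^{N,1}_s,\mu^N_s)d\beta^1_s + J^{N,1}_t,$$
where $J^{N,1}$ is the jump martingale, I would apply $(a+b+c+d)^2 \le 4(a^2+b^2+c^2+d^2)$, take the supremum over $t\le T$ and then the expectation. To handle a priori possibly non-integrable quantities, I would first localize along $\tau_M := \inf\{t : |X^{N,1}_t| \ge M\}$, prove the bound on $[0,\tau_M\wedge T]$ with a constant independent of $M$ and $N$, and let $M\to\infty$ by Fatou's lemma.

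The initial term is controlled by $\nu_0\in\PP_2(\r)$, which gives $\esp{|X^{N,1}_0|^2} = \int_\r |x|^2\nu_0(dx) < \infty$. For the drift and the Brownian integral I would use Assumption~\ref{hyppsi}.$iii)$, which makes $b$ and $\sigma$ bounded: Cauchy--Schwarz yields $\sup_{t\le T}(\int_0^t b\, ds)^2 \le \|b\|_\infty^2 T^2$, while Doob's maximal inequality together with Itô's isometry gives $\esp{\sup_{t\le T}(\int_0^t\sigma\, d\beta^1_s)^2} \le 4\|\sigma\|_\infty^2 T$. Neither bound depends on $N$.

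The crux is the jump term. Since $\Psi$ is centred (Assumption~\ref{hyppsi}.$i)$), the compensator of the raw Poisson integral vanishes, so $J^{N,1}$ coincides with its compensated version and is a square-integrable martingale with predictable quadratic variation
$$\langle J^{N,1}\rangle_T = \frac1N\sum_{k=1,k\neq 1}^N\int_0^T\int_E \Psi(X^{N,k}_s,X^{N,1}_s,\mu^N_s,u^k,u^1)^2 f(X^{N,k}_s,\mu^N_s)d\nu(u)ds,$$
with no cross terms because the measures $\pi^k$ are independent. Bounding $f$ by $\|f\|_\infty$ (Assumption~\ref{XNiwellposed}.$ii)$) and $\int_E\Psi^2 d\nu$ by the finite constant $\sup_{x,y,m}\int_E\Psi^2 d\nu$ of Assumption~\ref{barXiwellposed}.$ii)$, the prefactor $N^{-1}$ exactly compensates the sum over $N-1$ particles, whence $\esp{\langle J^{N,1}\rangle_T}\le CT$ uniformly in $N$; Doob's inequality then yields $\esp{\sup_{t\le T}(J^{N,1}_t)^2}\le 4\esp{\langle J^{N,1}\rangle_T}\le CT$.

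Collecting the four estimates and taking the supremum over $N$ concludes. The only genuinely delicate point is this uniform-in-$N$ control of the jump martingale: one must verify that the $N^{-1/2}$ scaling, once squared, cancels the growth of the sum over coexisting particles, which rests on the independence of the $\pi^k$ (killing the cross terms) and on the $L^2(\nu)$-boundedness of $\Psi$. Note that the boundedness of $b$ and $\sigma$ lets me avoid any Grönwall argument; under the weaker sublinear growth of the Remark following Assumption~\ref{hyppsi}, the drift and diffusion estimates would instead leave a term $\int_0^t\esp{\sup_{r\le s}|X^{N,1}_r|^2}ds$ (the empirical-measure contribution being absorbed via exchangeability and Jensen's inequality), which would be closed by Grönwall's lemma.
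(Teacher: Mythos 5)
Your proof is correct and takes essentially the same route as the paper's: the same decomposition into initial value, bounded drift, Brownian integral and jump martingale, with the centring of $\Psi$ making the raw Poisson integral a martingale whose predictable bracket is bounded uniformly in $N$ because the $1/N$ from the squared scaling cancels the sum over $N-1$ particles, followed by a maximal inequality (the paper invokes Burkholder--Davis--Gundy where you use Doob plus the isometry, and it bounds the bracket via $\int_E\sup_{x,y,m}\Psi^2\,d\nu$ where you use $\sup_{x,y,m}\int_E\Psi^2\,d\nu$ from Assumption~\ref{barXiwellposed}). Your explicit localization via $\tau_M$ and Fatou is a harmless refinement the paper leaves implicit.
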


\begin{proof}
Notice that 
$$\underset{0\leq s\leq t}{\sup} |X^{N,1}_s|\leq (X^{N,1}_0) + ||b||_\infty t + \underset{0\leq s\leq t}{\sup}\ll|\int_0^s \sigma(X^{N,1}_r,\mu^N_r)d\beta^1_r\rr| + \frac{1}{\sqrt{N}}\underset{0\leq s\leq t}{\sup}|M^N_s|,$$
where $M^N$ is the local martingale
$$M^N_t := \sum_{k=2}^N\int_{[0,t]\times\r_+\times E}\Psi(X^{N,k}_{s-},X^{N,1}_{s-},\mu^N_{s-},u^k,u^1)\uno{z\leq f(X^{N,k}_{s-},\mu^N_{s-})}d\pi^k(s,z,u).$$

Consequently, by Burkholder-Davis-Gundy's inequality and Assumption~\ref{hyppsi},
$$\esp{\underset{0\leq s\leq t}{\sup} |X^{N,1}_s|^2}\leq C + C||b||^2_{\infty}t^2 + ||\sigma||^2_\infty t + t||f||_\infty\frac{N-1}{N}\int_E\underset{x,y,m}{\sup}\Psi(x,y,m,u^1,u^2)^2 d\nu(u).$$
This proves the result.
\end{proof}

\subsection{Proof of \eqref{eq:good}}

\begin{lem}
\label{suitethm43}
Grant Assumptions~\ref{XNiwellposed},~\ref{barXiwellposed} and~\ref{hyppsi}. With the notation introduced in the proof of Theorem~\ref{convergingsubsequence}, we have
$$\esp{F(\mu^N)}\underset{N\rightarrow\infty}{\longrightarrow}\esp{F(\mu)}.$$
\end{lem}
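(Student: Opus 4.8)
The plan is to exploit two structural features of $F$. First, $F$ is \emph{bounded} uniformly in its argument $m\in\mathcal{P}(D(\r_+,\r))$, so that the assertion reduces to convergence in probability of $F(\mu^N)$ towards $F(\mu)$. Indeed, recalling \eqref{M}, the bracket equals $\phi(\gamma_t)-\phi(\gamma_s)-\int_s^t\int_\r\int_\r L\phi(\gamma_r,m_r,x,v)\nu_1(dv)m_r(dx)dr$; under Assumption~\ref{hyppsi}.$iii)$ the coefficients $b,\sigma$ are bounded, under Assumption~\ref{XNiwellposed}.$ii)$ and Remark~\ref{kappa2lip} the functions $f,\kappa$ are bounded, and by Cauchy--Schwarz together with Assumption~\ref{barXiwellposed}.$ii)$ the term $\int_\r\int_\r f(x,m)\tilde\Psi(x,y^1,m,v)\tilde\Psi(x,y^2,m,v)\nu_1(dv)m_r(dx)$ is bounded uniformly in $y^1,y^2,m$. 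Since $\phi\in C^3_b(\r^2)$ and the $\psi_l,\phi_l$ are bounded, $F$ is bounded by a constant depending only on the test functions and the constants in the assumptions; hence $\{F(\mu^N)\}_N$ is uniformly integrable and by bounded convergence it suffices to prove $F(\mu^N)\to F(\mu)$ in probability.

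I would then invoke Skorokhod's representation theorem to place, on an auxiliary space, copies (still denoted $\mu^N,\mu$) for which $\mu^N\to\mu$ almost surely for the weak topology, the moment bound of Lemma~\ref{apriorimckeandiffu} transferring by equality in law. The pieces of $F$ built from $\psi_l(\mu_{s_l})$ and from the outer integral $\int\mu\otimes\mu(d\gamma)\prod_l\phi_l(\gamma_{s_l})[\phi(\gamma_t)-\phi(\gamma_s)]$ are genuinely weak-continuous at $\mu$: the evaluation maps $\gamma\mapsto\gamma_{r}$ are continuous $\mu\otimes\mu$-almost surely at every $r$ that is a.s. a continuity time of the limit (which holds for a.e. $r$, and for all $r$ once the limit is known to have continuous paths), so these terms pass to the limit by the continuous mapping theorem applied to $\mu^N\otimes\mu^N\to\mu\otimes\mu$.

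The only delicate term is the generator integral, where $\mu^N_r$ enters both as the integrating measure and, through $b,\sigma,f,\kappa,\tilde\Psi$, as a $W_1$-Lipschitz argument. Setting $G(m,x):=\int_\r L\phi(\gamma_r,m,x,v)\nu_1(dv)$, I split its increment at level $r$ as $\int[G(\mu^N_r,x)-G(\mu_r,x)]\mu^N_r(dx)+[\int G(\mu_r,x)\mu^N_r(dx)-\int G(\mu_r,x)\mu_r(dx)]$. The second bracket vanishes by weak convergence, since $G(\mu_r,\cdot)$ is bounded and continuous in $x$ (Lipschitz continuity of $f,\kappa,\tilde\Psi$ in $x$, plus the $L^2(\nu_1)$ bounds), while the first is bounded by $CW_1(\mu^N_r,\mu_r)$ thanks to the $W_1$-Lipschitz continuity of the coefficients (Assumptions~\ref{XNiwellposed}.$ii)$ and~$iii)$, Remark~\ref{kappa2lip}), the Lipschitz constants being integrable against $\mu^N_r$ by Assumption~\ref{XNiwellposed}.$iii)$.

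The main obstacle is precisely upgrading the a.s. weak convergence $\mu^N_r\to\mu_r$ to $W_1(\mu^N_r,\mu_r)\to0$, which I would carry out in $L^1$ by truncation. Writing $T_K(x)=(x\wedge K)\vee(-K)$ and denoting by $\mu^{N,K}_r,\mu^{K}_r$ the image measures of $\mu^N_r,\mu_r$ under $T_K$, one has $W_1(\mu^N_r,\mu_r)\le\int_\r(|x|-K)_+\mu^N_r(dx)+\int_\r(|x|-K)_+\mu_r(dx)+W_1(\mu^{N,K}_r,\mu^{K}_r)$; the truncated measures live on $[-K,K]$, on which $W_1$-convergence coincides with weak convergence, so the last term tends to $0$ almost surely and is bounded by $2K$. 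Taking expectations and using $\esp{\int(|x|-K)_+\mu^N_r(dx)}\le K^{-1}\esp{\int|x|^2\mu^N_r(dx)}\le C/K$ (by exchangeability and Lemma~\ref{apriorimckeandiffu}), together with the same bound for $\mu_r$, gives $\limsup_N\esp{W_1(\mu^N_r,\mu_r)}\le 2C/K$, and letting $K\to\infty$ yields $\esp{W_1(\mu^N_r,\mu_r)}\to0$, hence convergence in probability uniformly in $\gamma$. Combining this with the boundedness of all integrands and Fubini's theorem (to integrate in $r\in[s,t]$, in $v$, and against the outer measures $\mu^N\otimes\mu^N$), dominated convergence closes the argument and yields $\esp{F(\mu^N)}\to\esp{F(\mu)}$. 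The crux throughout is exactly the reconciliation of the weak convergence of the empirical measures with the $W_1$-Lipschitz dependence of the coefficients, the technical difficulty flagged in the introduction.
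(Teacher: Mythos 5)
Your overall strategy is sound and, for the crucial $W_1$-upgrade, genuinely different from the paper's; but as written there is one genuine gap: you never prove that the limit $\mu$ is a.s. supported by continuous trajectories, and your argument uses this twice. First, the truncation step starts from the a.s. weak convergence $\mu^N_r\to\mu_r$ at each \emph{fixed} $r$ (needed both for $W_1(\mu^{N,K}_r,\mu^{K}_r)\to 0$ and for your ``second bracket''); second, the terms $\psi_l(\mu_{s_l})$, $\phi_l(\gamma_{s_l})$ and $\phi(\gamma_t)-\phi(\gamma_s)$ require the evaluation maps to be $\mu\otimes\mu$-a.s.\ continuous at the prescribed times $s_1,\dots,s_k,s,t$. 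Your parenthetical ``(for a.e.\ $r$, and for all $r$ once the limit is known to have continuous paths)'' does not suffice: the lemma must hold for arbitrary fixed times because it feeds the martingale problem, and for a general c\`adl\`ag limit the exceptional null set of times could contain them. The paper's Step~1 closes exactly this hole: each jump of $X^{N,1}$ is bounded by $N^{-1/2}\psi^*(u^k,u^1)$ with $\psi^*(u^1,u^2):=\sup_{x,y,m}|\Psi(x,y,m,u^1,u^2)|$, and dominating the largest jump on $[0,T]$ by a sum over a Poisson$(NT\|f\|_\infty)$ number of i.i.d.\ marks, Assumption~\ref{hyppsi}~$ii)$ gives $\esp{\sup_{s\le T}|\Delta X^{N,1}_s|^3}\le C\,N^{-1/2}\to 0$; Proposition~VI.3.26 of \cite{jacod_limit_2003} then yields $C$-tightness of $\mathcal{L}(X^{N,1})$, hence $\mu(C(\r_+,\r))=1$ a.s. Once this block is inserted your proof is complete, modulo the routine remark that the bound $\esp{\mu_r(|x|^2)}\le C$ for the limit measure, which you invoke as ``the same bound for $\mu_r$'', itself requires a truncation-plus-Fatou passage from the $\mu^N$ bounds.

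Where you genuinely diverge from the paper -- and gain in simplicity -- is the upgrade from weak to $W_1$ convergence. The paper augments to the triplet $(\mu^N,X^{N,1},\mu^N(|x|))$, applies Skorokhod's representation a second time, shows that the limit of $\mu^N(|x|)$ is a continuous process $A$, and combines Fatou's lemma with the identification $\esp{\mu}=\mathcal{L}(Y)$ to force $\liminf=\limsup$, obtaining a.s.\ convergence $W_1(\mu^N_t,\mu_t)\to 0$ simultaneously for all $t$. Your truncation inequality $W_1(\mu^N_r,\mu_r)\le\int(|x|-K)_+d\mu^N_r+\int(|x|-K)_+d\mu_r+W_1(\mu^{N,K}_r,\mu^{K}_r)$, together with $(|x|-K)_+\le |x|^2/K$, exchangeability and Lemma~\ref{apriorimckeandiffu}, delivers only $\esp{W_1(\mu^N_r,\mu_r)}\to 0$ at each fixed $r$; but this $L^1$ statement is precisely what the estimates of the $A_2$/$A_3$ type consume, and the uniform bound $\sup_N\sup_{r\le t}\esp{W_1(\mu^N_r,\mu_r)}<\infty$ (again from Lemma~\ref{apriorimckeandiffu}) justifies dominated convergence in $r$. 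Your route thus avoids the second Skorokhod embedding and the auxiliary process $A$ entirely, at the price of a weaker (but sufficient) mode of convergence; the remaining pieces of your argument (boundedness of $F$, Kantorovich--Rubinstein-type Lipschitz estimates in the measure argument, weak convergence against the bounded continuous kernel $G(\mu_r,\cdot)$) match the paper's Step~3 decomposition into $A_1+A_2+A_3$ in substance.
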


\begin{proof}
Let us recall that $\mu^N$ denotes the empirical measure of $(X^{N,i})_{1\leq i\leq N}$ and that $\mu$ is the limit in distribution of (a subsequence of) $\mu^N.$

{\it Step~1.} We first show that almost surely, $ \mu $ is supported by continuous trajectories.  For that sake, we start showing that $ P^N := \esp{ \mu^N } = {\mathcal L} ( X^{N, 1 } ) $ is $C-$tight. This follows from Prop VI. 3.26 in \cite{jacod_limit_2003}, observing that 
$$ \lim_{N \to \infty} \esp{\sup_{s \le T } | \Delta X_s^{N, 1 } |^3 } = 0 , $$
which follows from our conditions on $ \psi.$ Indeed, writing $ \psi^* ( u^1, u^2 ) :=\sup_{x, y ,m } \psi ( x, y , m, u^1, u^2 ), $ we can stochastically upper bound
$$ \sup_{s \le T } | \Delta X_s^{N, 1 } |^3 \le \sup_{k \le K } | \psi^* ( U^{k, 1 }, U^{k, 2} ) |^3/N^{3/2} , $$
where $K \sim Poiss ( N T \|f\|_\infty ) $ is Poisson distributed with parameter $ N T \|f\|_\infty,$ and where $ (U^{k, 1 }, U^{k,2 } )_k $ is an i.i.d. sequence of $ \nu_1 \otimes \nu_1-$distributed random variables, independent of $K.$ The conclusion then follows from the fact that due to our Assumption \ref{hyppsi}, $ \esp{ | \psi^* ( U^{k, 1 }, U^{k, 2} ) |^3 } < \infty $ such that we can upper bound 
\begin{multline*}
 \esp{\sup_{k \le K } | \psi^* ( U^{k, 1 }, U^{k, 2} ) |^3/N^{3/2} } \le \esp{\frac{1}{N^{3/2}} \sum_{k=1}^K  | \psi^* ( U^{k, 1 }, U^{k, 2} ) |^3 }\\
  \le \frac{\esp{ | \psi^* ( U^{k, 1 }, U^{k, 2} ) |^3 }}{N^{3/2} } \esp{K} =\frac{\esp{ | \psi^* ( U^{k, 1 }, U^{k, 2} ) |^3 }}{N^{3/2} } N T \|f\|_\infty \to 0
\end{multline*}
as $ N \to 0.$ 
 
As a consequence of the above arguments, we know that $ \esp{ \mu ( \cdot ) } $ is supported by continuous trajectories. In particular, almost surely, $ \mu $ is also supported by continuous trajectories. Indeed, $\mu(C(\r_+,\r))$ is a r.v. taking values in $[0,1], $ and its expectation equals one. Thus $\mu(C(\r_+,\r))$ equals one a.s.

We now turn to the heart of this proof and show that $\esp{ F( \mu^N) } \to \esp{  F( \mu  } .$ The latter expression contains terms like 
$$\int_s^tb(Y^1_r,\mu _r)\partial_{x^1}\phi(Y^1_r,Y^2 _r)dr$$
for some bounded smooth function $ \phi .$  
However, by our assumptions, the continuity of $ m \mapsto b ( x, m) $ is expressed with respect to the Wasserstein $1-$distance. Yet, we only have information on the convergence of $\mu^N_r$ to $ \mu_r$  for the topology of the weak convergence.

In what follows we make use of Skorokhod's representation theorem and realize all random measures $ \mu^N $ and $\mu$ on an appropriate probability space such that we have almost sure convergence of these realizations (we do not change notation), that is, we know that almost surely, 
$$ \mu^N \to \mu $$
as $N \to \infty.$ (Recall that we have already chosen a subsequence in the beginning of the proof of Theorem~\ref{convergingsubsequence}). Since $\mu$ is almost surely supported by continuous trajectories, we also know that almost surely, $\mu_t^N \to \mu_t$ weakly for all $t$ (this is a consequence of Theorem~12.5.(i) of \cite{billingsley_convergence_1999}). 

{\it Step~2.} In a first time, let us prove that, a.s., for all $r,$ $\mu^N_r$ converges to $\mu_r$ for the metric $W_1$. Thus we need to show additionally that almost surely, for all $t \geq 0, $ $ \int |x| d \mu_t^N(x) \to \int |x| d \mu_t(x).$ 

To prove this last fact, it will be helpful to consider rather the convergence of the triplets $ ( \mu^N, X^{N, 1 }, \mu^N (|x|)).$  Since the sequence of laws of these triplets is tight as well (the tightness of $(\mu^N)_N$ and $(X^{N,1})_N$ have been stated in Section~\ref{tightnessmesuremartingale}, and the tightness of $(\mu^N(|x|)_N)$ is classical from Aldous' criterion since $\mu^N_t(|x|) = N^{-1}\sum_{k=1}^N |X^{N,k}_t|$), we may assume that, after having chosen another subsequence and then a convenient realization of this subsequence, we dispose of a sequence of random triplets such that almost surely, as $N \to \infty, $ 
$$  ( \mu^N, X^{N, 1 }, \mu^N ( |x| ) )  \to ( \mu, Y, A), $$
where $ A = ( A_t)_t$ is some process having c\`adl\`ag trajectories. In addition, it can be proven that the sequence $(\mu^N(|x|))_N$ is $C-$tight (for similar reasons as $(X^{N,1})_N$), hence $A$ has continuous trajectories.

Taking a bounded and continuous function $ \Phi: D( \r_+, \r) \to \r, $ we observe that, as $ N \to \infty, $  
$$ \esp{ \int_{D( \r_+, \r) } \Phi d \mu  } \leftarrow  \esp{ \int_{D( \r_+, \r) } \Phi d \mu^N  } = \esp{ \Phi( X^{N, 1} ) } \to \esp{ \Phi ( Y) } , $$
such that $ \esp{\mu} = {\mathcal L} ( Y). $

Notice that from the above follows that $Y$ is necessarily a continuous process, since $\esp{\mu}$ is supported by continuous trajectories. Notice also that for the moment we do not know if $ A = \mu ( |x|).$  

Using that $ \sup_N \esp{\sup_{t \le T }  |X_t^{N, 1 }|^2 } < \infty $ (see our a priori estimates Lemma~\ref{apriorimckeandiffu}), we deduce that the sequence 
$ (\sup_{ t \le T} |X_t^{N, 1 }|^{3/2} )_N$ is uniformly integrable. Therefore, $ \esp{\sup_{t \le T}  |X_t^{N, 1 } |^{3/2}} \to \esp{\sup_{ t \le T} | Y_t|^{3/2}} < \infty .$ 
In particular, we also have that  
$$\esp{\sup_{t\le T} \mu_t ( |x|^{3/2})} < \infty \quad \mbox{ and thus} \quad \sup_{t\le T} \mu_t ( |x|^{3/2}) < \infty \mbox{ almost surely,}$$ 
for all $T,$ since  
\begin{multline*}
 \esp{\sup_{t\le T} \mu_t ( |x|^{3/2})} = \esp{ \sup_{ t \le T } \int_{D(\r_+, \r)} | \gamma_t|^{3/2}  \mu (d \gamma) } \le  \esp{    \int_{D(\r_+, \r)} \sup_{ t \le T } | \gamma_t|^{3/2}  \mu (d \gamma) }  \\
 = \esp{ \sup_{ t \le T } | Y_t|^{3/2}} < \infty .
\end{multline*}

We know that, a.s., $\mu^N$ converges weakly to $\mu$ and $\mu(C(\r_+,\r))=1.$ Let us fix some $\omega\in\Omega$ for which the two previous properties hold. In the following, we omit this $\omega$ in the notation. Let $\varepsilon > 0, $ $t \le T$ and choose $M $ such that $ \int |x|\wedge M d \mu_t \geq \int |x| d \mu_t - \varepsilon. $ Then, as $N \to \infty, $ almost surely, 
$$ \int |x| d \mu_t^N \geq \int |x| \wedge M d \mu_t^N \to  \int |x| \wedge M d \mu_t .$$
Thus
$$ \liminf_N \int |x| d \mu_t^N \geq  \int |x| d \mu_t - \varepsilon , $$ 
such that
\begin{equation}\label{eq:liminf}
  \liminf_N  \int |x| d \mu_t^N \geq  \int |x| d \mu_t  .
\end{equation}
Fatou's lemma implies that 
$$ \esp{ \liminf_N \int |x| d \mu_t^N  } \leq \liminf_N  \esp{ \int |x| d \mu_t^N  }  = \liminf_N \esp{ |X_t ^{N, 1 } |} = \esp{ |Y_t|} = \esp{\int |x| d \mu_t}   .$$ 
Together with \eqref{eq:liminf} this implies that, almost surely,  
$$  \liminf_N \int |x| d \mu_t^N =   \int |x| d \mu_t  .$$ 
Finally, since $ \int |x| d\mu^N \to A $ and since $A$ is continuous, for all $t, $ 
$$   \liminf_N \int |x| d \mu_t^N = \limsup_N   \int |x| d \mu_t^N =   \int |x| d \mu_t  .$$   

This implies that almost surely, for all $t \geq 0, $ $ \int |x| d \mu_t^N(x) \to \int |x| d \mu_t(x)  = A_t < \infty.$ In particular, almost surely, for all $ t \geq 0, $ 
$$ W_1 ( \mu_t^N, \mu_t ) \to 0 $$ (see e.g. Theorem 6.9 of \cite{villani_optimal_2008}).

{\it Step~3.} Now we prove that $\esp{F(\mu^N)}$ converges to $\esp{F(\mu)},$ where we recall that
\begin{multline*}
F(\mu)=\psi_1(\mu_{s_1}) \cdot \ldots \cdot \psi_k(\mu_{s_k})\int_{D(\r_+,\r)^2}\mu\otimes\mu(d\gamma)\phi_1(\gamma_{s_1})\ldots\phi_k(\gamma_{s_k})\\
\ll[\phi(\gamma_t)-\phi(\gamma_s)-\int_s^t\int_\r\int_\r L\phi(\gamma_r,\mu_r,x,v)\nu_1(dv)\mu_r(dx)dr\rr],
\end{multline*}
where $\psi_i\in C_b(\mathcal{P}(\r)),\phi_i\in C_b(\r^2)$ ($1\leq i\leq k$) and $\phi\in C^3_b(\r^2).$ Let us recall some facts: by the boundedness of the functions $\psi_i$ ($1\leq i\leq k$) and our boundedness Assumption~\ref{hyppsi}, it is sufficient to prove the two following convergence:
\begin{align}
&\esp{|\psi_1(\mu^N_{s_1}) \cdot \ldots \cdot \psi_k(\mu^N_{s_k})-\psi_1(\mu_{s_1}) \cdot \ldots \cdot \psi_k(\mu_{s_k})|}\underset{N\rightarrow\infty}{\longrightarrow}0,\label{lespsi}\\
&\esp{|G(\mu^N)-G(\mu)|}\underset{N\rightarrow\infty}{\longrightarrow}0,\label{leg}
\end{align}
with
\begin{multline*}
G(\mu) := \int_{D(\r_+,\r)^2}\mu\otimes\mu(d\gamma)\phi_1(\gamma_{s_1})\ldots\phi_k(\gamma_{s_k})\\
\ll[\phi(\gamma_t)-\phi(\gamma_s)-\int_s^t\int_\r\int_\r L \phi (\gamma_r,\mu_r,x,v)\nu_1(dv)\mu_r(dx)dr\rr] .
\end{multline*}

Indeed, since the functions $\psi_i$ ($1\leq i\leq k$) and $G$ are bounded, we have
\begin{multline*}
\esp{|F(\mu^N)-F(\mu)|}\leq C \esp{|\psi_1(\mu^N_{s_1}) \cdot \ldots \cdot \psi_k(\mu^N_{s_k})-\psi_1(\mu_{s_1}) \cdot \ldots \cdot \psi_k(\mu_{s_k})|}  \\
+ C   \esp{|G(\mu^N)-G(\mu)|} .
\end{multline*}
The convergence~\eqref{lespsi} follows from dominated convergence and the fact that the function
$$m\in\mathcal{P}(D(\r_+,\r))\mapsto \psi_1(m_{s_1})...\psi_k(m_{s_k})\in\r$$
is bounded and continuous at~$\mu,$ since $\mu$ is supported by continuous trajectories.

To prove the convergence~\eqref{leg}, let us recall that we have already shown that
\begin{enumerate}
\item $\underset{N}{\sup}\underset{0\leq s\leq t}{\sup}\esp{\mu^N_s(|x|^{3/2}) }<\infty,$
\item $\underset{0\leq s\leq t}{\sup}\esp{\mu_t(|x|^{3/2})}<\infty,$
\item $\mu(C(\r_+,\r))=1~a.s.$
\item a.s. $\forall r,$ $\mu^N_r$ converges to $\mu_r$ for the metric $W_1,$
\item for all $x,x'\in\r,y,y'\in\r^2,m,m'\in\mathcal{P}_1(\r),v\in\r,$
$$|L\phi(y,m,x,v)-L\phi(y',m',x',v)|\leq C(v)(||y-y'||_1+|x-x'|+W_1(m,m')),$$
such that $\int_\r C(v)\nu_1(dv)<\infty,$
\item $$\int_\r \underset{x,y,m}{\sup} L\phi(y,m,x,v)\nu_1(dv)<\infty.$$
\end{enumerate}

In order to simplify the presentation, let us assume that the function $G$ is of the form 
$$G(\mu)=\int_{D^2}\mu \otimes \mu (d\gamma)\int_s^t\int_\r\int_\r L\phi(\gamma_r,\mu _r,x,v)\nu_1(dv)\mu _r(dx)dr.$$

Now, let us show that $\esp{|G(\mu^N)-G(\mu)|}$ vanishes as $N$ goes to infinity.

\begin{align*}
|G(\mu)-G(\mu^N)|\leq& \ll|G(\mu) - \int_{D^2}\mu^N\otimes\mu^N(d\gamma)\ll(\int_s^t\int_\r\int_\r L\phi(\gamma_r,\mu_r,x,v)\nu_1(dv)\mu_r(dx)dr\rr)\rr|\\
+&\ll|\int_{D^2}\mu^N\otimes\mu^N(d\gamma)\ll(\int_s^t\int_\r\int_\r L\phi(\gamma_r,\mu_r,x,v)\nu_1(dv)\mu_r(dx)dr\rr)\rr.\\
&\ll.- \int_{D^2}\mu^N\otimes\mu^N(d\gamma)\ll(\int_s^t\int_\r\int_\r L\phi(\gamma_r,\mu_r,x,v)\nu_1(dv)\mu^N_r(dx)dr\rr)\rr|\\
+&\ll|G(\mu^N) - \int_{D^2}\mu^N\otimes\mu^N(d\gamma)\ll(\int_s^t\int_\r\int_\r L\phi(\gamma_r,\mu_r,x,v)\nu_1(dv)\mu^N_r(dx)dr\rr)\rr|\\
&=:A_1+A_2+A_3.
\end{align*}

We first show that $A_1$ vanishes a.s. (this implies that $\esp{A_1}$ vanishes by dominated convergence).
$A_1$ is of the form
$$A_1 = \ll|\int_{D^2}\mu\otimes\mu(d\gamma) H(\gamma) - \int_{D^2}\mu^N\otimes\mu^N(d\gamma)H(\gamma)\rr|,$$
with
$$H : \gamma\in D^2 \mapsto  \int_s^t\int_\r\int_\r L\phi(\gamma_r,\mu_r,x,v)\nu_1(dv)\mu_r(dx)dr\in\r.$$

We just have to prove that $H$ is continuous and bounded. The boundedness is obvious, so let us verify the continuity.

Let $(\gamma^n)_n$ converge to $\gamma$ in $D(\r_+,\r)^2$. We have
\begin{align*}
|H(\gamma) - H(\gamma^n)|\leq& \int_s^t\int_\r\int_\r|H(\gamma_r,\mu_r,x,v) - H(\gamma^n_r,\mu_r,x,v)|\nu_1(dv)\mu_r(dx)dr\\
\leq& \int_s^t\int_\r\int_\r C(v) ||\gamma_r-\gamma^n_r||_1 \nu_1(dv)\mu_r(dx)dr\\
\leq& C\int_s^t ||\gamma_r-\gamma^n_r||_1dr,
\end{align*}
which vanishes by dominated convergence: the integrand vanishes at every continuity point~$r$ of $\gamma$ (whence for a.e. $r$), and, for $n$ big enough, $\sup_{r\leq t}||\gamma^n_r||_1 \leq 2\sup_{r\leq t}||\gamma_r||_1.$

Now we show that $\esp{A_2}$ vanishes. We have
\begin{multline*}
A_2\leq \int_{D^2}\mu^N\otimes\mu^N(d\gamma)\\
\ll(\int_s^t\ll|\int_\r\int_\r L\phi(\gamma_r,\mu_r,x,v)\nu_1(dv)\mu_r(dx) - \int_\r\int_\r L\phi(\gamma_r,\mu_r,x,v)\nu_1(dv)\mu^N_r(dx)\rr|dr\rr).
\end{multline*}

Since the function $x\in\r\mapsto\int_\r L\phi(\gamma_r,\mu_r,x,v)\nu_1(dv)$ is Lipschitz continuous (with Lipschitz constant independent of $\gamma_r$ and $\mu_r$), we have, by Kantorovich-Rubinstein duality (see e.g. Remark~6.5 of \cite{villani_optimal_2008}),
$$A_2\leq C\int_{D^2}\mu^N\otimes\mu^N(d\gamma)\int_s^t W_1(\mu^N_r,\mu_r)dr=C\int_s^tW_1(\mu^N_r,\mu_r)dr.$$
Hence
$$\esp{A_2}\leq C\int_s^t\esp{W_1(\mu^N_r,\mu_r)}dr,$$
which vanishes by dominated convergence: the integrand vanishes thanks to {\it Step~2}, and the uniform integrability follows from the fact that
$$\underset{N}{\sup}\int_s^t\esp{W_1(\mu^N_r,\mu_r)^{3/2}}dr\leq C(t-s)\underset{N}{\sup}\underset{0\leq s\leq t}{\sup}\esp{\mu^N_s(|x|)^{3/2}}+C(t-s)\underset{0\leq s\leq t}{\sup}\esp{\mu_s(|x|)^{3/2}}.$$

We finally show that $\esp{A_3}$ vanishes.
\begin{align*}
A_3\leq& \int_{D^2}\mu^N\otimes\mu^N(d\gamma)\ll(\int_s^t\int_\r\int_\r \ll|L\phi(\gamma_r,\mu^N_r,x,v)-L\phi(\gamma_r,\mu_r,x,v)\rr|\nu_1(dv)\mu^N_r(dx)dr\rr)\\
\leq&\int_{D^2}\mu^N\otimes\mu^N(d\gamma)\ll(\int_s^t\int_\r\int_\r C(v)W_1(\mu^N_r,\mu_r)\nu_1(dv)\mu^N_r(dx)dr\rr). 
\end{align*}

Then,
$$\esp{A_3}\leq C\int_s^t\esp{W_1(\mu^n_r,\mu_r)}dr,$$
which vanishes for the same reasons as in the previous step where we have shown that $\esp{A_2}$ vanishes. 
\end{proof}

\bibliography{biblio27-2-2021}
\end{document}